\newtheorem{thm}{\textbf Theorem}[section]
\newtheorem{lem}[thm]{\textbf Lemma}
\newtheorem{rem}[thm]{\textbf Remark}
\newcommand{\be}{\begin{eqnarray*}}
\newcommand{\ee}{\end{eqnarray*}}
\begin{document}

\title{\bf  On the generalized Feynman-Kac transformation for nearly symmetric Markov processes}
  \author{\large  Li Ma  \qquad Wei Sun
            \\ Department of Mathematics and Statistics\\
             Concordia University, Canada}
   \date{}
\maketitle

\begin{abstract}
\noindent Suppose $X$ is a right process which is associated with
a non-symmetric Dirichlet form $(\mathcal{E},D(\mathcal{E}))$ on
$L^{2}(E;m)$. For $u\in D(\mathcal{E})$, we have Fukushima's
decomposition:
$\tilde{u}(X_{t})-\tilde{u}(X_{0})=M^{u}_{t}+N^{u}_{t}$. In this
paper, we investigate the strong continuity of the generalized
Feynman-Kac semigroup defined by
$P^{u}_{t}f(x)=E_{x}[e^{N^{u}_{t}}f(X_{t})]$. Let
$Q^{u}(f,g)=\mathcal{E}(f,g)+\mathcal{E}(u,fg)$ for $f,g\in
D(\mathcal{E})_{b}$. Denote by $J_1$ the dissymmetric part of the
jumping measure $J$ of $(\mathcal{E},D(\mathcal{E}))$. Under the
assumption that $J_1$ is finite, we show that
$(Q^{u},D(\mathcal{E})_{b})$ is lower semi-bounded if and only if
there exists a constant $\alpha_0\ge 0$ such that
$\|P^{u}_{t}\|_2\leq e^{\alpha_0 t}$ for every $t>0$. If one of
these conditions holds, then $(P^{u}_{t})_{t\geq0}$ is strongly
continuous on $L^{2}(E;m)$. If $X$ is equipped with a differential
structure, then this result also holds without assuming that $J_1$
is finite.

\vskip 0.2cm \noindent {\bf Keywords:} Non-symmetric Dirichlet
form; generalized Feynman-Kac semigroup; strong continuity; lower
semi-bounded; Beurling-Deny formula; LeJan's transformation rule
\end{abstract}
\section[short title]{Introduction }
Let $E$ be a metrizable Lusin space and
$X=((X_{t})_{t\geq0},(P_{x})_{x\in E})$ be a right (continuous
strong Markov) process on $E$ (cf. \cite[IV, Definition
1.8]{MR92}). Suppose that $X$ is associated with a (non-symmetric)
Dirichlet form $(\mathcal{E},D(\mathcal{E}))$ on $L^2(E;m)$, where
$m$ is a $\sigma$-finite measure on the Borel $\sigma$-algebra
${\mathcal{B}}(E)$ of $E$. Then, by \cite[IV, Theorem 6.7]{MR92}
(cf. also \cite[Theorem 3.22]{F01}),
$(\mathcal{E},D(\mathcal{E}))$ is quasi-regular. Moreover,
$(\mathcal{E},D(\mathcal{E}))$ is quasi-homeomorphic to a regular
Dirichlet form (see \cite{C04}). We refer the reader to
\cite{MR92} and \cite{Fu94} for the theory of Dirichlet forms. The
notations and terminologies of this paper follow \cite{MR92} and
\cite{Fu94}.

Let $u\in D(\mathcal{E})$. Then, we have Fukushima's decomposition
(cf. \cite[VI, Theorem 2.5]{MR92})
$$\tilde{u}(X_{t})-\tilde{u}(X_{0})=M^{u}_{t}+N^{u}_{t},$$
where $\tilde{u}$ is a quasi-continuous $m$-version of $u$,
$M_{t}^u$ is a square integrable martingale additive functional
(MAF) and $N_{t}^u$ is a continuous additive functional (CAF) of
zero energy. For $x\in E$, denote by $E_x$ the expectation with
respect to (w.r.t.) $P_x$. Define the generalized Feynman-Kac
transformation
$$
P^{u}_{t}f(x)=E_{x}[e^{N^{u}_{t}}f(X_{t})],\ \ f\ge 0\ {\rm and}\
t\ge 0.
$$
In this paper, we will investigate the strong continuity of the
semigroup $(P^{u}_{t})_{t\geq0}$ on $L^2(E;m)$.

The strong continuity of generalized Feynman-Kac semigroups for
symmetric Markov processes has been studied extensively by many
people. Note that in general $(N_{t}^u)_{t\ge 0}$ is not of finite
variation (cf. \cite[Example 5.5.2]{Fu94}). Hence the classical
results of Albeverio and Ma given in \cite{AM1} do not apply
directly. Under the assumption that $X$ is the standard
$d$-dimensional Brownian motion, $u$ is a bounded continuous
function on $\mathbf{R}^d$ and $|\nabla u|^{2}$ belongs to the
Kato class, Glover et al. proved in \cite{G94} that
$(P_{t}^{u})_{t\ge 0}$ is a strongly continuous semigroup on
$L^{2}({\mathbf{R}^d};dx)$. Moreover, they gave an explicit
representation for the closed quadratic form corresponding to
$(P_{t}^{u})_{t\ge 0}$. In \cite{TSZ01}, Zhang generalized the
results of \cite{G94} to symmetric L\'evy processes on
$\mathbf{R}^d$ and removed the assumption that $u$ is bounded
continuous. Furthermore, Z.Q. Chen and Zhang established in
\cite{C02} the corresponding results for general symmetric Markov
processes via Girsanov transformation. They proved that if
$\mu_{\langle u\rangle}$, the energy measure of $u$, is a measure
of the Kato class, then $(P_{t}^{u})_{t\ge 0}$ is a strongly
continuous semigroup on $L^{2}(E;m)$. Also, they characterized the
closed quadratic form corresponding to $(P_{t}^{u})_{t\ge 0}$. In
\cite{FK04}, Fitzsimmons and Kuwae established the strong
continuity of $(P_{t}^{u})_{t\ge 0}$ under the assumption that $X$
is a symmetric diffusion process and $\mu_{\langle u\rangle}$ is a
measure of the Hardy class. Furthermore, Z.Q. Chen et al.
established in \cite{C08} the strong continuity of
$(P_{t}^{u})_{t\ge 0}$ for general symmetric Markov processes
under the assumption that $\mu_{\langle u\rangle}$ is a measure of
the Hardy class.

All the results mentioned above give sufficient conditions for
$(P_{t}^{u})_{t\ge 0}$ to be strongly continuous, where
$\mu_{\langle u\rangle}$ is assumed to be of either the Kato class
or the Hardy class. In \cite{CS06}, under the assumption that $X$
is a symmetric diffusion process, C.Z. Chen and Sun showed that
the semigroup $(P_{t}^{u})_{t\ge 0}$ is strongly continuous on
$L^2(E;m)$ if and only if the bilinear form
$({{Q}}^{u},D(\mathcal{E})_{b})$ is lower semi-bounded. Here and
henceforth
\begin{equation}\label{ap101}Q^{u}(f,g):=\mathcal{E}(f,g)+\mathcal{E}(u,fg),\ \ f,g\in
D(\mathcal{E})_{b}:=D(\mathcal{E})\cap L^{\infty}(E;m).
\end{equation}
Furthermore, C.Z. Chen et al. generalized this result to general
symmetric Markov processes in \cite{CSM07}. In \cite{C09}, Z.Q.
Chen et al. studied general perturbations of symmetric Markov
processes and gave another proof for the equivalence of the strong
continuity of $(P_{t}^{u})_{t\ge 0}$ and the lower
semi-boundedness of $({{Q}}^{u},D(\mathcal{E})_{b})$.

The aim of this paper is to study the strong continuity problem of
generalized Feynman-Kac semigroups for nearly symmetric Markov
processes. Note that many useful tools of symmetric Dirichlet
forms, e.g. time reversal and Lyons-Zheng decomposition, do not
apply well to the non-symmetric setting.  That makes the problem
more difficult. Also, we would like to point out that the Girsanov
transformed process of $X$ induced by $M^u_t$ and the Girsanov
transformed process of $\hat{X}$ induced by $\hat{M}^u_t$ are not
in duality in general (cf. \cite{CS09}), where $\hat{X}$ is the
dual process of $X$ and $\hat{M}^u_t$ is the martingale part of
$\tilde{u}(\hat{X}_t)-\tilde{u}(\hat{X}_0)$. The method of this
paper is inspired by \cite{CSM07} and \cite{C09}. We will combine
the $h$-transform method of \cite{CSM07} and the localization
method used in \cite{C09}. It is worth to point out that the
Beurling-Deny formula given in \cite{HC06} and LeJan's
transformation rule developed in \cite{HZC08} play a crucial role
in this paper.

Denote by $J$ and $K$ the jumping and killing measures of
$(\mathcal{E},D(\mathcal{E}))$, respectively. Write
$\hat{J}(dx,dy)=J(dy,dx)$. Denote by $J_1:=(J-\hat{J})^+$ the
positive part of the Jordan decomposition of $J-\hat{J}$. $J_1$ is
called the dissymmetric part of $J$. Note that $J_0:=J-J_1$ is the
largest symmetric $\sigma$-finite positive measure dominated by
$J$. Denote by $d$ the diagonal of the product space $E\times E$;
and denote by $\|\cdot\|_2$ and $(\cdot,\cdot)_m$ the norm and
inner product of $L^2(E;m)$, respectively.

Now we can state the main results of the paper.
\begin{thm}\label{th1.1}
Suppose that $X$ is a right process which is associated with a
(non-symmetric) Dirichlet form $(\mathcal{E},D(\mathcal{E}))$ on
$L^{2}(E;m)$. Let $u\in D({\cal E})$. Assume that $J_{1}(E\times
E\backslash d)<\infty$.
Then the following two conditions are equivalent:\\
(i) There exists a constant $\alpha_{0}\geq0$ such that
$$
  Q^{u}(f,f)\geq -\alpha_0(f,f)_m,\ \ \forall f\in
D(\mathcal{E})_{b}.
$$
 (ii) There exists a constant $\alpha_{0}\geq0$ such that
$$
\|P^{u}_{t}\|_2\leq e^{\alpha_{0}t},\ \ \forall t>0.$$
Furthermore, if one of these conditions holds, then the semigroup
$(P^{u}_{t})_{t\geq0}$ is strongly continuous on $L^{2}(E;m)$.
\end{thm}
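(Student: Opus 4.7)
The plan is to prove the two implications separately and obtain strong continuity as a by-product of the constructive direction.

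For (ii) $\Rightarrow$ (i), the idea is to read off the lower bound on $Q^u(f,f)$ from the infinitesimal behaviour of $(P^u_t f,f)_m$ at $t=0$. By the Beurling-Deny decomposition of $(\mathcal{E},D(\mathcal{E}))$ from \cite{HC06} together with Fukushima's decomposition, the quantity
\[
 \frac{1}{t}(f-P^u_t f,f)_m
\]
converges as $t\downarrow 0$ to $Q^u(f,f)$ for every $f\in D(\mathcal{E})_b$; the jump contributions responsible for the non-symmetric part are absolutely convergent thanks to $J_1(E\times E\setminus d)<\infty$. The hypothesis $\|P^u_t\|_2\le e^{\alpha_0 t}$ gives $(f-P^u_t f,f)_m\ge(1-e^{\alpha_0 t})\|f\|_2^2$, and dividing by $t$ and letting $t\downarrow 0$ yields $Q^u(f,f)\ge -\alpha_0(f,f)_m$.

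The harder direction (i) $\Rightarrow$ (ii) follows the $h$-transform plus Girsanov plus localization blueprint of \cite{CSM07} and \cite{C09}, adapted to the non-symmetric setting. First I localize to the bounded case: set $u_n=((-n)\vee u)\wedge n$, prove $\|P^{u_n}_t\|_2\le e^{\alpha_0 t}$ for each $n$, and pass to the limit using the closedness of $Q^u$ on $D(\mathcal{E})_b$. When $u$ is bounded, one has the factorization
\[
 P^u_t f(x)=e^{-\tilde u(x)}E_x\bigl[e^{-M^u_t}e^{\tilde u(X_t)}f(X_t)\bigr],
\]
which identifies $P^u_t$ as an $h$-transform with $h=e^{\tilde u}$ of the semigroup produced from $X$ by the multiplicative functional $e^{-M^u_t}$. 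Using LeJan's transformation rule from \cite{HZC08} to track the diffusion, jumping and killing parts of the resulting bilinear form, I compute its action on $D(\mathcal{E})_b$ explicitly and match it with $Q^u$ modulo a perturbation whose size is controlled by $J_1(E\times E\setminus d)$.

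The main obstacle is the non-duality flagged in the introduction: the Girsanov transform of $X$ by $M^u$ is not in duality with the Girsanov transform of $\hat X$ by $\hat M^u$ (cf.\ \cite{CS09}), which rules out a direct symmetric-side calculation via time reversal or the Lyons-Zheng decomposition. This is exactly where the assumption $J_1(E\times E\setminus d)<\infty$ is consumed: the failure of duality is concentrated on the dissymmetric part of the jumping measure and can be rewritten as a bounded perturbation of $Q^u$ of order $J_1(E\times E\setminus d)$, then absorbed into the constant $\alpha_0$ via a Lumer-Phillips-type inequality. Once $\|P^u_t\|_2\le e^{\alpha_0 t}$ is established, strong continuity follows by combining this uniform operator-norm bound with the weak convergence $(P^u_t f,g)_m\to (f,g)_m$ for $f,g\in D(\mathcal{E})_b$, which itself comes from the factorization above together with dominated convergence and the continuity of $N^u$ at $t=0$.
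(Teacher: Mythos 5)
Your overall architecture (h-transform with $h=e^{\tilde u}$, localization, identification of the transformed form with $Q^u$) matches the paper's strategy, but there are three concrete gaps. First, in (ii)$\Rightarrow$(i) you assert that $\frac{1}{t}(f-P^u_tf,f)_m\to Q^u(f,f)$ for every $f\in D(\mathcal{E})_b$ as a consequence of the Beurling--Deny decomposition and Fukushima's decomposition. That convergence is the entire content of the hard part of the theorem, not a citation: the paper never proves it for the global semigroup, only for spatially localized semigroups $P^{u,n}_tf(x)=E_x[e^{N^u_t}f(X_t);t<\tau_{E_n}]$ on $D(\mathcal{E})_{n,b}$, and even there it requires (a) strong continuity of the transformed semigroup on $L^2(E_n;m)$, obtained via an $\mathcal{E}$-nest on which the relevant Revuz measures are of Kato class, (b) an It\^{o}-formula identification of the generator, and (c) the exact identity $\mathcal{E}^{u,n}(f,g)=Q^u(fe^{-u^*},ge^{-u^*})$ proved through the Beurling--Deny formula and LeJan's rule. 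Second, your localization by truncation $u_n=((-n)\vee u)\wedge n$ does not work as stated: hypothesis (i) is a lower bound for $Q^u$, and it does not yield a lower bound for $Q^{u_n}$ with a uniform (or any) constant, so there is nothing to start the induction from. The paper instead localizes in space, working on the fine interiors $E_n$ of compacts where a nonnegative modification $u^*=u+|u|_E$ is bounded, so that the localized form is genuinely the restriction of $Q^u$; the nonnegativity of $u^*$ is also what makes the jump compensator $B^p_t$ locally integrable, a point your dominated-convergence appeal to ``continuity of $N^u$ at $t=0$'' glosses over.

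Third, and most seriously, your account of where $J_1(E\times E\setminus d)<\infty$ is consumed is wrong. The dissymmetric jump contribution has the shape $\int f(x)g(y)\,J_1(dx,dy)$, which for a finite $J_1$ is controlled by $\|f\|_\infty\|g\|_\infty$ but not by $\|f\|_2\|g\|_2$; it therefore cannot be ``absorbed into the constant $\alpha_0$'' as a bounded perturbation of order $(f,f)_m$. In the paper the hypothesis is used for something entirely different: it is the condition under which the chain rule $\langle L(\psi(w),v),f\rangle=\sum_i\langle L(w_i,v),\psi_{x_i}(w)f\rangle$ for the antisymmetric local functional $\check{\mathcal{E}}^c$ (LeJan's transformation rule in the non-symmetric setting of \cite{HZC08}) is valid, which is what makes the identity $\mathcal{E}^{u,n}(f,g)=Q^u(fe^{-u^*},ge^{-u^*})$ hold exactly, with no error term and no degradation of $\alpha_0$. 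Without that exact identity the dissipativity argument via Hille--Yosida (which is how the paper converts (i) into the contraction bound for $e^{-\alpha_0t}P^{u,n}_t$) does not get off the ground. Your closing step, deducing strong continuity from the norm bound plus weak convergence on a dense set, is fine in principle (weak plus norm convergence gives strong convergence in Hilbert space), but the weak convergence itself again rests on the localized strong continuity you have not established.
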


\begin{thm}\label{th1.2}
Let $U$ be an open set of $\mathbf{R^d}$ and $m$ be a positive
Radon measure on $U$ with ${\rm supp}[m]=U$. Suppose that $X$ is a
right process which is associated with a (non-symmetric) Dirichlet
form $(\mathcal{E},D(\mathcal{E}))$ on $L^{2}(U;m)$ such that
$C^{\infty}_0(U)$ is dense in $D(\mathcal{E})$. Then the
conclusions of Theorem \ref{th1.1} remain valid without assuming
that $J_{1}(E\times E\backslash d)<\infty$.
\end{thm}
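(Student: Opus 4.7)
The plan is to reduce Theorem \ref{th1.2} to Theorem \ref{th1.1} by approximating $u$ with smooth compactly supported functions, using the differential structure to compensate for the missing finiteness of $J_1$. Since $C^{\infty}_0(U)$ is dense in $D(\mathcal{E})$, choose $u_n\in C^{\infty}_0(U)$ with $u_n\to u$ in the $\mathcal{E}_1$-norm. For each such $u_n$, supported on a compact set $K_n\subset U$ and satisfying $|u_n(x)-u_n(y)|\le\|\nabla u_n\|_\infty(|x-y|\wedge\mathrm{diam}(K_n))$, the integrals $\int(u_n(x)-u_n(y))(f(x)g(x)-f(y)g(y))\,J_1(dx,dy)$ entering the Beurling--Deny decomposition of $\mathcal{E}(u_n,fg)$ are absolutely convergent for every $f,g\in D(\mathcal{E})_b$, because $(|x-y|^2\wedge 1)\,J(dx,dy)$ is a Radon measure on $U\times U\setminus d$ whenever $C^{\infty}_0(U)$ is a core. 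Thus the $J_1$-finiteness hypothesis of Theorem \ref{th1.1} was only used to legitimize exactly these integrals, and the smoothness of $u_n$ replaces it.

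Next, I would reexamine the proof of Theorem \ref{th1.1} with $u$ replaced by $u_n$, verifying that every step in which $J_1(E\times E\setminus d)<\infty$ was invoked (notably in the Beurling--Deny formula of \cite{HC06} and LeJan's transformation rule of \cite{HZC08}) still goes through thanks to the bound above. This gives, for every $n$, the equivalence (i)$\Leftrightarrow$(ii) and the strong continuity of $(P^{u_n}_t)_{t\ge 0}$ on $L^{2}(U;m)$, obtained as in Theorem \ref{th1.1} by combining the $h$-transform method of \cite{CSM07} with the localization method of \cite{C09}.

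The third step is to pass to the limit $n\to\infty$ in both implications. The convergence $u_n\to u$ in $\mathcal{E}_1$-norm yields $Q^{u_n}(f,f)\to Q^u(f,f)$ for every $f\in D(\mathcal{E})_b$, so the lower semi-boundedness constant transfers in either direction. To transfer the semigroup bound one needs $P^{u_n}_tf\to P^u_tf$ in $L^2(U;m)$ for $f\in L^{2}(U;m)\cap L^{\infty}(U;m)$, which should follow by showing $e^{N^{u_n}_t}\to e^{N^u_t}$ in $L^{1}(P_m)$ after a suitable truncation of the exponential and using the uniform bounds $\|P^{u_n}_t\|_2\le e^{\alpha_n t}$ to close the limit argument.

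The main obstacle is this last step: since $N^u_t$ is generally not of finite variation, one cannot rely on bounded-variation convergence of $N^{u_n}_t\to N^u_t$. Moreover, in the non-symmetric setting the Girsanov transform of $X$ by $M^u_t$ and the dual transform on $\hat X$ by $\hat M^u_t$ are not mutually dual (as recalled in the introduction), so the symmetric tricks of \cite{CSM07} and \cite{C09} do not directly apply. Overcoming this requires combining the localization of \cite{C09} with uniform $L^p$ exponential-moment estimates for $N^{u_n}_t-N^u_t$, driven by the convergence $u_n\to u$ in $\mathcal{E}_1$ together with LeJan's transformation rule controlling the martingale bracket $\langle M^{u_n-u}\rangle_t$.
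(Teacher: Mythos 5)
Your proposal takes a genuinely different route from the paper, and the route has an essential gap. In the paper, the hypothesis $J_1(E\times E\setminus d)<\infty$ enters at exactly one point: the identification $\mathcal{E}^{u,n}(f,g)=Q^{u}(fe^{-u^{*}},ge^{-u^{*}})$ of Theorem \ref{th0}, which reduces to the purely algebraic identity (\ref{newnew}) for bounded $\phi$ and $f,g\in D(\mathcal{E})_{n,b}$. Under the hypotheses of Theorem \ref{th1.2} the paper does \emph{not} approximate $u$ at all; it keeps $u$ (more precisely its bounded truncations $u^{*}_l$) fixed and verifies (\ref{newnew}) for smooth compactly supported test functions $\phi,f,g$ by means of the representation (\ref{star44}) of $(\mathcal{E},C^{\infty}_0(G))$, in which the symmetric-principal-value jump integrals are compensated by the first-order term $\sum_i(y_i-x_i)\frac{\partial w}{\partial y_i}I_{\{|x-y|\le\delta/2\}}$; this is precisely what the differential structure buys, replacing the finiteness of $J_1$ that is otherwise needed to extend LeJan's chain rule (\ref{pro3.10-1}) to the dissymmetric local part $\check{\mathcal{E}}^{c}$. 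The passage from smooth $\phi,f,g$ to general ones is then an approximation inside a fixed bilinear identity, controlled by the a priori bounds (\ref{star11}) and (\ref{star12}); the remainder of the proof (localization, $h$-transform, Hille--Yosida) is verbatim that of Theorem \ref{th1.1}. Your remark that $J_1$-finiteness ``was only used to legitimize'' absolute convergence of certain integrals therefore misidentifies its role: the test functions $f,g$ in (\ref{newnew}) remain general elements of $D(\mathcal{E})_{n,b}$ even when $u$ is smooth, so smoothing $u$ alone does not repair the chain-rule step.

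The decisive gap is your third step, the limit $n\to\infty$, which you yourself flag but do not close. Two concrete failures. First, lower semi-boundedness does not transfer with controlled constants: $Q^{u_n}(f,f)-Q^{u}(f,f)=\mathcal{E}(u_n-u,f^2)$ is bounded by the sector condition only through $k_1\mathcal{E}_1(u_n-u,u_n-u)^{1/2}\mathcal{E}_1(f^2,f^2)^{1/2}$, which is not dominated by $\varepsilon\,\mathcal{E}(f,f)+C\,(f,f)_m$ uniformly in $f$; hence $Q^{u}\ge-\alpha_0$ does not yield $Q^{u_n}\ge-\alpha_n$ with bounded $\alpha_n$, nor conversely. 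Second, the convergence $P^{u_n}_tf\to P^{u}_tf$ in $L^2(U;m)$ would require uniform exponential integrability of $N^{u_n}_t-N^{u}_t$ under $P_m$, an estimate that is not available from $\mathcal{E}_1$-convergence of $u_n$ to $u$ when $N^{u}$ has unbounded variation, and that is exactly the kind of quantitative control (Kato or Hardy class conditions on $\mu_{\langle u_n-u\rangle}$) this theorem is designed to dispense with. Without these two links the equivalence for each $u_n$ does not propagate to $u$, so the argument as proposed does not prove the statement.
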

The rest of this paper is organized as follows. In Section 2, we
give the proof of Theorems \ref{th1.1} and \ref{th1.2}. In Section
3, we give two examples, one is finite-dimensional and the other
one is infinite-dimensional.
\section[short title]{Proof of the Main Results}\setcounter{equation}{0}

In this section, we will prove Theorems \ref{th1.1} and
\ref{th1.2}. By quasi-homeomorphism, we assume without loss of
generality that $X$ is a Hunt process and
$(\mathcal{E},D(\mathcal{E}))$ is a regular (non-symmetric)
Dirichlet form
 on $L^{2}(E;m)$, where $E$ is a locally compact
separable metric space and $m$ is a positive Radon measure on $E$
with ${\rm supp}[m]=E$. We denote by $\Delta$ and $\zeta$ the
cemetery and lifetime of $X$, respectively. It is known that every
$f\in D(\mathcal{E})$ has a quasi-continuous $m$-version. To
simplify notation, we still denote this version by $f$.

Let $u\in D(\mathcal{E})$. By \cite[III, Proposition 1.5]{MR92},
there exists $|u|_E\in D({\cal E})$ such that $|u|_E\ge |u|$
$m-a.e.$ on $E$ and ${\cal E}_1(|u|_E,w)\ge 0$ for all $w\in
D({\cal E})$ with $w\ge 0$ $m-a.e.$ on $E$. Similar to
\cite[Theorems 2.2.1 and 2.2.2]{Fu94}, we can show that there
exists a positive Radon measure $\eta_u$ on $E$ such that
\begin{equation}\label{dd}
{\cal E}_1(|u|_E,w)=\int_Ewd\eta_u,\ \ w\in D({\cal E}).
\end{equation}
Define \begin{equation}\label{u}u^{*}:=u+|u|_E.\end{equation}
Then, $u^{*}$ has a quasi-continuous $m$-version which is
nonnegative q.e. on $E$. Moreover, there exists an ${\cal E}$-nest
$\{F_n\}_{n\in\mathbf{N}}$ consisting of compact sets of $E$ such
that $u^{*}$ is continuous and hence bounded on $F_n$ for each
$n\in\mathbf{N}$. Define $\tau_{F_n}=\inf\{t>0| X_{t}\in
F^c_{n}\}$. By \cite[IV, Proposition 5.30]{MR92},
$\lim_{n\rightarrow\infty}\tau_{F_n}=\zeta$ $P_x$-a.s. for q.e.
$x\in E$.

Let $(N,H)$ be a L\'evy system of $X$ and $\nu$ be the Revuz
measure of $H$.  Define \begin{equation}\label{ap11}
B_{t}=\sum_{s\leq
t}\left[e^{(u^{*}(X_{s-})-u^{*}(X_{s}))}-1-(u^{*}(X_{s-})-u^{*}(X_{s}))\right].\end{equation}
Note that for any $M>0$ there exists $C_M>0$ such that
$(e^x-1-x)\le C_Mx^2$ for all $x$ satisfying $x\le M$. Since
$(u^{*}(X_{t-}))_{t\ge 0}$ is locally bounded, $(u^{*}(X_t))_{t\ge
0}$ is nonnegative and $M^{-u^{*}}$ is a square integrable
martingale for q.e. $x\in E$, hence $(B_t)_{t\ge 0}$ is locally
integrable on $[0, \zeta)$ for q.e. $x\in E$. Here and henceforth
the phrase ``on $[0, \zeta)$" is understood as ``on the optional
set $[[0, \zeta))$ of interval type" in the sense of \cite[Chap.
VIII, §3]{He}. By \cite[(A.3.23)]{Fu94}, one finds that the dual
predictable projection of $(B_t)_{t\ge 0}$ is given by
$$
B^{p}_{t}=\int_{0}^{t}\int_{
E_{\triangle}}[e^{(u^{*}(X_{s})-u^{*}(y))}-1-(u^{*}(X_{s})-u^{*}(y))]N(X_{s},dy)dH_{s}.
$$

We set
\begin{equation}\label{add31}
   M_{t}^{d}=B_{t}-B_{t}^{p}
\end{equation}
and denote
\begin{equation}\label{add32}
M_{t}=M_{t}^{-u^{*}}+M_{t}^{d}. \end{equation} Note that for any
$M>0$ there exists $D_M>0$ such that $(e^x-1-x)^2\le D_Mx^2$ for
all $x$ satisfying $x\le M$. Since $(u^{*}(X_{t-}))_{t\ge 0}$ is
locally bounded, $(u^{*}(X_t))_{t\ge 0}$ is nonnegative and
$M^{-u^{*}}$ is a square integrable martingale for q.e. $x\in E$,
hence $(M^d_{t})_{t\ge 0}$ is a locally square integrable MAF on
$[0, \zeta)$ for q.e. $x\in E$ by \cite[Theorem 7.40]{He}.
Therefore $(M_{t})_{t\ge 0}$ is a locally square integrable MAF on
$[0, \zeta)$ for q.e. $x\in E$. We denote the Revuz measure of
$(<M>_t)_{t\ge 0}$ by $\mu_{<M>}$ (cf. \cite[Remark 2.2]{C08a}).

Let $M^{-u^{*},c}_{t}$ be the continuous part of $M^{-u^{*}}_{t}$.
Define
\begin{equation}\label{add33}
A^{-u^{*}}_{t}=B^{p}_{t}+\frac{1}{2}<M^{-u^{*},c}>_{t}.
\end{equation}
Then $(A^{-u^{*}}_{t})_{t\ge 0}$ is a positive CAF (PCAF). Denote
by $\mu_{-u^{*}}$ the Revuz measure of $(A^{-u^{*}}_{t})_{t\ge
0}$. Then
\begin{eqnarray}\label{add34}
\mu_{-u^{*}}(dx)&=&\int_{E_{\triangle}}
        [e^{(u^{*}(x)
        -u^{*}(y))}-1-(u^{*}(x)-u^{*}(y))]N(x,dy)\nu(dx)\nonumber\\
        & &\ \ \ \ +\frac{1}{2}\mu_{<M^{-u^{*},c}>}(dx).
\end{eqnarray}
Define
\begin{equation}\label{ap1}
\mu_{-u}:=\mu_{-u^{*}}+\eta_u-|u|_Em
\end{equation}
and
$$
\mu'_{-u}:=\mu_{-u^{*}}+\eta_u+|u|_Em.
$$

Recall that a smooth measure $\mu$ is said to be of the Kato class
if $$\lim_{t\rightarrow 0}\inf_{{\rm Cap}(N)=0}\sup_{x\in
E-N}E_x[A^{\mu}_t]=0,
$$
where $(A^{\mu}_t)_{t\ge 0}$ is the PCAF associated with $\mu$.
Denote by $S_{K}$ the Kato class of smooth measures. Similar to
\cite[Theorem 2.4]{AM2}, we can show that there exists an ${\cal
E}$-nest $\{F'_n\}_{n\in\mathbf{N}}$ consisting of compact sets of
$E$ such that $I_{F'_{n}}(\mu_{<M>}+\mu'_{-u})\in S_{K}$. To
simplify notation, we still use $F_n$ to denote $F_n\cap F'_n$ for
$n\in\mathbf{N}$. Let $E_{n}$ be the fine interior of $F_{n}$
w.r.t. $X$. Define $D(\mathcal{E})_{n}:=\{f\in D(\mathcal{E})|f=0
\ {\rm q.e.\ on}\ E_{n}^{c}\}$, $\tau_{E_n}=\inf\{t>0| X_{t}\in
E^c_{n}\}$ and
$$\bar{P}_{t}^{u,n}f(x):=E_{x}[e^{M^{-u^{*}}_{t}-N_t^{|u|_E}}f(X_t);t<\tau_{E_n}].$$

\subsection[short title]{The bilinear form associated with
${(\bar{P}^{u,n}_{t})_{t\geq0}}$ on ${L^2(E_n;m)}$}

\noindent For $n\in\mathbf{N}$, we define the bilinear form
$(\bar{Q}^{u,n},D(\mathcal{E})_n)$ by
\begin{equation}\label{e2.8}
\bar{Q}^{u,n}(f,g)=\mathcal{E}(f,g)-\int_{E}gd\mu_{<M^{f},M>}-\int_{E}fgd\mu_{-u},\
\ f,g\in D(\mathcal{E})_n.
\end{equation}
By \cite[Lemma 4.3]{C03},  for every $\varepsilon>0$, there exists
a constant $A^n_{\varepsilon}>0$ such that
$$
\int_{E}w^{2}d(\mu_{<M>}+\mu'_{-u})\leq \varepsilon
\mathcal{E}(w,w)+A^n_{\varepsilon}\|w\|^{2}_{2},\ \ w\in
D(\mathcal{E})_n.
$$
Suppose that $|{\cal E}(f,g)|\le k_1{\cal
E}_1(f,f)^{\frac{1}{2}}{\cal E}_1(g,g)^{\frac{1}{2}}$ for all
$f,g\in D(\mathcal{E})$ and some constant $k_1>0$. Then
\begin{eqnarray}\label{e2.9} |\bar{Q}^{u,n}(f,g)|&\leq&k_{1}\mathcal{E}_{1}(f,f)^{\frac{1}{2}}\mathcal{E}_{1}(g,g)^{\frac{1}{2}}
       +\left(\int_{E}d\mu_{<M^{f}>}\right)^{\frac{1}{2}}\left(\int_{E}g^{2}d\mu_{<M>}\right)^{\frac{1}{2}}\nonumber\\
\nonumber &&+
       \left(\int_{E}f^{2}d\mu'_{-u}\right)^{\frac{1}{2}}\left(\int_{E}g^{2}d\mu'_{-u}\right)^{\frac{1}{2}}\\
\nonumber
&\leq&k_{1}\mathcal{E}_{1}(f,f)^{\frac{1}{2}}\mathcal{E}_{1}(g,g)^{\frac{1}{2}}
       +(\max(\varepsilon, A^n_{\varepsilon}))^{\frac{1}{2}}[2\mathcal{E}(f,f)]^{\frac{1}{2}}\mathcal{E}_{1}(g,g)^{\frac{1}{2}}\\
\nonumber&&+\max(\varepsilon,A^n_{\varepsilon})\cdot\mathcal{E}_{1}(f,f)^{\frac{1}{2}}\mathcal{E}_{1}(g,g)^{\frac{1}{2}}\\
&\leq&\theta_n\mathcal{E}_{1}(f,f)^{\frac{1}{2}}\mathcal{E}_{1}(g,g)^{\frac{1}{2}},
\end{eqnarray}
where
$\theta_n:=(k_{1}+\sqrt{2\max(\varepsilon,A^n_{\varepsilon})}+\max(\varepsilon,A^n_{\varepsilon}))$.

Fix an $\varepsilon<(\sqrt{2}-1)/(\sqrt{2}+1)$ and set
$\alpha_n:=2A^n_{\varepsilon}$. Then
\begin{eqnarray}\label{e2.10}
 \bar{Q}^{u,n}_{\alpha_n}(f,f)&:=&\bar{Q}^{u,n}(f,f)+\alpha_n(f,f)\nonumber\\
\nonumber&\geq&\mathcal{E}(f,f)-\left(\int_{E}d\mu_{<M^{f}>}\right)^{\frac{1}{2}}\left(\int_{E}f^{2}d\mu_{<M>}\right)^{\frac{1}{2}}\\
& &-\int_{E}f^{2}d\mu'_{-u}+\alpha_n(f,f)\nonumber\\
\nonumber &\geq&\mathcal{E}(f,f)-(\varepsilon\
        \mathcal{E}(f,f)+A^n_{\varepsilon}\|f\|^{2}_{2})^{\frac{1}{2}}[2\mathcal{E}(f,f)]^{\frac{1}{2}}\nonumber\\
        & &
        \ \ \ \ \ \ -(\varepsilon\
        \mathcal{E}(f,f)+A^n_{\varepsilon}\|f\|^{2}_{2})+\alpha_n(f,f)\nonumber\\
\nonumber&\geq&\mathcal{E}(f,f)-\frac{1}{\sqrt{2}}((1+\varepsilon)\mathcal{E}(f,f)+A^n_{\varepsilon}\|f\|^{2}_{2})\nonumber\\
& &\ \ \ \ \ \ -(\varepsilon\
        \mathcal{E}(f,f)+A^n_{\varepsilon}\|f\|^{2}_{2})+\alpha_n(f,f)\nonumber\\
     &\geq&\frac{\sqrt{2}-1-(\sqrt{2}+1)\varepsilon}{\sqrt{2}}\mathcal{E}(f,f)
        +\frac{(\sqrt{2}-1)A^n_{\varepsilon}}{\sqrt{2}}\|
        f\|^{2}_{2}.
\end{eqnarray}

By (\ref{e2.9}), (\ref{e2.10}) and \cite[I, Proposition
3.5]{MR92}, we know that
$(\bar{Q}^{u,n}_{\alpha_n},D(\mathcal{E}))$ is a coercive closed
form on $L^{2}(E_n;m)$.

\begin{thm}\label{th2.1} For each $n\in\mathbf{N}$,
$(\bar{P}^{u,n}_{t})_{t\geq0}$ is a strongly continuous semigroup
of bounded operators on $L^{2}(E_n;m)$ with
$\|\bar{P}^{u,n}_{t}\|_{2}\leq e^{\beta_{n}t}$ for every $t>0$ and
some constant $\beta_n>0$. Moreover, the coercive closed form
associated with $(e^{-\beta_nt}\bar{P}^{u,n}_{t})_{t\geq0}$ is
given by $(\bar{Q}^{u,n}_{\beta_n},D(\mathcal{E})_n)$.
\end{thm}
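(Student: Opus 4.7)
The plan is to apply \cite[I, Proposition 3.5]{MR92}, already invoked in deriving (\ref{e2.9}) and (\ref{e2.10}), which asserts that the coercive closed form $(\bar{Q}^{u,n}_{\alpha_n},D(\mathcal{E})_n)$ corresponds to a unique strongly continuous contraction semigroup on $L^{2}(E_n;m)$. It then suffices to show that $(\bar{P}^{u,n}_{t})_{t\ge 0}$ is a semigroup of bounded operators on $L^{2}(E_n;m)$ with $\|\bar{P}^{u,n}_{t}\|_{2}\le e^{\beta_n t}$, and to identify the bilinear form $\bar{Q}^{u,n}_{\beta_n}$ as the one associated with $(e^{-\beta_n t}\bar{P}^{u,n}_{t})_{t\ge 0}$ for $\beta_n:=\alpha_n$. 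The semigroup property of $(\bar{P}^{u,n}_{t})$ follows from the Markov property, the additivity of $M^{-u^{*}}$ and $N^{|u|_E}$, and the fact that $\tau_{E_n}$ is a terminal time.

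For the $L^{2}$ bound, I would split
$$
e^{M^{-u^{*}}_{t}-N^{|u|_E}_{t}} \;=\; e^{M^{-u^{*}}_{t}-A^{-u^{*}}_{t}}\cdot e^{A^{-u^{*}}_{t}-N^{|u|_E}_{t}}.
$$
An application of It\^o's formula to the first factor, using that the jumps of $M^{-u^{*}}$ are compensated by $M^{d}=B-B^{p}$ and that the continuous-part bracket $\frac{1}{2}\langle M^{-u^{*},c}\rangle$ is built into $A^{-u^{*}}$ in (\ref{add33}), shows that $e^{M^{-u^{*}}_{t}-A^{-u^{*}}_{t}}$ is a nonnegative local martingale, hence a supermartingale. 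The exponent of the second factor is a CAF whose associated signed Revuz measure is $\mu_{-u}=\mu_{-u^{*}}+\eta_u-|u|_E m$ (see (\ref{ap1})), dominated in absolute value by $\mu'_{-u}$. Applying Cauchy--Schwarz gives
$$
|\bar{P}^{u,n}_{t}f(x)|^{2}\le E_{x}[e^{2M^{-u^{*}}_{t}-2A^{-u^{*}}_{t}}]\cdot E_{x}[e^{2(A^{-u^{*}}_{t}-N^{|u|_E}_{t})}f^{2}(X_t);\,t<\tau_{E_n}],
$$
and the Khas'minskii lemma applied to the Kato-class measure $I_{F_n}\mu'_{-u}\in S_K$, together with an analogous estimate for the supermartingale factor using $\mu_{<M>}$, yields $\|\bar{P}^{u,n}_{t}\|_{2}\le e^{\beta_n t}$ for a suitable $\beta_n>0$.

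To identify the associated bilinear form, I would apply It\^o's product rule to $Y_t f(X_t)$ with $Y_t:=e^{M^{-u^{*}}_{t}-N^{|u|_E}_{t}}$ and $f\in D(\mathcal{E})_n\cap L^{\infty}(E;m)$, invoking Fukushima's decomposition $f(X_t)-f(X_0)=M^{f}_{t}+N^{f}_{t}$. After taking $E_x$, pairing against $g\in D(\mathcal{E})_n\cap L^{\infty}(E;m)$ and integrating against $m$, the local-martingale pieces drop out and the Revuz correspondence turns the bounded-variation contributions into $t\int_E g\,d\mu_{<M^{f},M>}$, $t\int_E fg\,d\mu_{-u}$ and $t\mathcal{E}(f,g)$, each up to $o(t)$. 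Dividing by $t$ and letting $t\downarrow 0$ gives
$$
\lim_{t\downarrow 0}\frac{1}{t}(f-\bar{P}^{u,n}_{t}f,g)_m \;=\; \mathcal{E}(f,g)-\int_E g\,d\mu_{<M^{f},M>}-\int_E fg\,d\mu_{-u} \;=\; \bar{Q}^{u,n}(f,g),
$$
which, combined with the $L^{2}$ bound, allows the standard correspondence between coercive closed forms and strongly continuous semigroups to identify $e^{-\beta_n t}\bar{P}^{u,n}_{t}$ as the semigroup associated with $\bar{Q}^{u,n}_{\beta_n}$, and thus gives strong continuity on $L^{2}(E_n;m)$.

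The principal obstacle is the form-identification step. Since $M=M^{-u^{*}}+M^{d}$ couples a Fukushima MAF with a compensated sum of nonlinear exponential jumps, computing $\mu_{<M^{f},M>}$ in terms of the Beurling--Deny ingredients of $\mathcal{E}$ requires the non-symmetric Beurling--Deny formula of \cite{HC06} and the LeJan-type transformation rule of \cite{HZC08}. Moreover, $N^{|u|_E}$ is only a CAF of zero energy and not of bounded variation in general, so its contribution is accessed indirectly through (\ref{dd}), forcing the signed measure $\mu_{-u}$ to split as $\mu_{-u^{*}}+\eta_u-|u|_E m$; carefully controlling the cancellations in this decomposition while passing to the limit $t\downarrow 0$, and justifying the exchange of limit and $m$-integration via the localization to $E_n$ and the Kato bound on $\mu'_{-u}$, is the technical heart of the argument.
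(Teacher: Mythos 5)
Your overall strategy matches the paper's: the paper proves this theorem by following the localization method of \cite[Theorem 1.1]{FK04} (noting it is simpler here because $I_{F_n}(\mu_{<M>}+\mu'_{-u})$ is of the Kato class and there is no time-reversal part), and the ingredients you list --- the semigroup property via the Markov property and terminal time $\tau_{E_n}$, the exponential local martingale $e^{M^{-u^{*}}_{t}-A^{-u^{*}}_{t}}$ (indeed $Z_t:=e^{M^{-u^{*}}_{t}-A^{-u^{*}}_{t}}$ solves $dZ=Z_{-}dM$ with $M=M^{-u^{*}}+M^{d}$, which is exactly why the paper introduces $M^d=B-B^p$ and $A^{-u^{*}}=B^p+\frac12<M^{-u^{*},c}>$), the Cauchy--Schwarz splitting, and the Khas'minskii bound from $I_{F_n}(\mu_{<M>}+\mu'_{-u})\in S_K$ --- are the same ones the paper relies on.

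The one place where you genuinely diverge, and where your version as written has a gap, is the form-identification step. You propose to compute $\lim_{t\downarrow 0}\frac1t(f-\bar{P}^{u,n}_tf,g)_m=\bar{Q}^{u,n}(f,g)$ for $f,g\in D(\mathcal{E})_{n,b}$ and then invoke ``the standard correspondence.'' But convergence of the approximating forms on the subspace $D(\mathcal{E})_{n,b}$ only yields one inclusion: it shows that the closed form generated by $(\bar{P}^{u,n}_t)$ extends $(\bar{Q}^{u,n}_{\beta_n},D(\mathcal{E})_{n,b})$; it does not by itself rule out that the semigroup's form domain is strictly larger than $D(\mathcal{E})_n$, nor does it justify dropping the local-martingale terms after taking $E_x$ without a localization/uniform-integrability argument. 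The paper's key Lemma 2.2 is designed precisely to avoid this: it takes $f$ in the generator domain $D(L^{\bar{Q}^{u,n}})$ of the form $(\bar{Q}^{u,n}_{\alpha_n},D(\mathcal{E})_n)$, which is already known to be a coercive closed form by (2.9), (2.10) and \cite[I, Proposition 3.5]{MR92}, and derives the exact pathwise semimartingale identity
$$f(X_t)e^{M^{-u^{*}}_{t}-N^{|u|_E}_{t}}=f(X_0)+(\mbox{local martingale})+\int_0^te^{M^{-u^{*}}_{s-}-N^{|u|_E}_{s-}}L^{\bar{Q}^{u,n}}f(X_s)\,ds$$
on $\{t<\tau_{E_n}\}$. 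From this one reads off that $L^{\bar{Q}^{u,n}}-\beta_n$ is contained in the generator of $(e^{-\beta_n t}\bar{P}^{u,n}_t)$, and since both operators generate strongly continuous resolvents, maximality forces them to coincide --- no separate argument about the size of the form domain is needed. You should either adopt this generator-domain route or supplement your computation with a proof that $D(\mathcal{E})_{n,b}$ is a core for the semigroup's form.
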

\begin{proof}
The proof is much similar to that of \cite[Theorem 1.1]{FK04},
which is based on a key lemma (see \cite[Lemma 3.2]{FK04}) and a
remarkable localization method. In fact, the proof of our Theorem
\ref{th2.1} is simpler since $I_{F_n}(\mu_{<M>}+\mu'_{-u})$ is of
the Kato class instead of the Hardy class and there is no time
reversal part in the semigroup $(\bar{P}^{u,n}_{t})_{t\geq0}$. We
omit the details of the proof here and only give the following key
lemma, which is the counterpart of \cite[Lemma 3.2]{FK04}.
\end{proof}

\begin{lem}\label{lem3.1}
Let $(L^{\bar{Q}^{u,n}}, D(L^{\bar{Q}^{u.n}}))$ be the generator
of $(\bar{Q}^{u,n},D(\mathcal{E})_n)$. Then, for any $f\in
D(L^{\bar{Q}^{u,n}})$, we have
\begin{eqnarray}\label{add2}
f(X_{t})e^{{M^{-u^{*}}_{t}-N_t^{|u|_E}}}
&=&f(X_{0})+\int^{t}_{0}e^{{M^{-u^{*}}_{s-}-N_{s-}^{|u|_E}}}dM^{f}_{s}\nonumber\\
&
&+\int^{t}_{0}e^{{M^{-u^{*}}_{s-}-N_{s-}^{|u|_E}}}f(X_{s-})dM_{s}\nonumber\\
&
&+\int^{t}_{0}e^{{M^{-u^{*}}_{s-}-N_{s-}^{|u|_E}}}L^{\bar{Q}^{u,n}}f(X_{s})ds
\end{eqnarray}
$P_m$-a.s. on $\{t<\tau_{E_n}\}$.
\end{lem}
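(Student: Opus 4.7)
The plan is to apply Itô's product formula to the semimartingale product $f(X_t)Y_t$, where $Y_t:=\exp(M^{-u^{*}}_t-N^{|u|_E}_t)$, and then identify the resulting bounded-variation part with $\int_0^t Y_{s-}L^{\bar{Q}^{u,n}}f(X_s)\,ds$ via the Revuz correspondence. This follows the template of \cite[Lemma 3.2]{FK04}, but is simpler since $I_{F_n}(\mu_{<M>}+\mu'_{-u})$ belongs to the Kato class and there is no time-reversal to handle.

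First I would apply Fukushima's decomposition to $f\in D(L^{\bar{Q}^{u,n}})\subset D(\mathcal{E})_n$, writing $f(X_t)-f(X_0)=M^f_t+N^f_t$. Next I would apply Itô's formula to $Y_t=\exp(Z_t)$ with $Z_t=M^{-u^{*}}_t-N^{|u|_E}_t$. The jumps of $Z$ come only from $M^{-u^{*}}$ and satisfy $\Delta Z_s=u^{*}(X_{s-})-u^{*}(X_s)$, so the jump correction in the Itô expansion is exactly the process $B_t$ of (\ref{ap11}). Using $M^d=B-B^p$ from (\ref{add31}) together with $M=M^{-u^{*}}+M^d$ from (\ref{add32}) and the definition (\ref{add33}) of $A^{-u^{*}}$, one obtains the decomposition
\begin{equation*}
Y_t=1+\int_0^t Y_{s-}\,dM_s-\int_0^t Y_{s-}\,dN^{|u|_E}_s+\int_0^t Y_{s-}\,dA^{-u^{*}}_s.
\end{equation*}
Substituting this and the Fukushima decomposition of $f(X_t)$ into the integration-by-parts formula $f(X_t)Y_t=f(X_0)+\int_0^t Y_{s-}df(X_s)+\int_0^t f(X_{s-})dY_s+[f(X),Y]_t$, the local-martingale pieces assemble into $\int_0^t Y_{s-}dM^f_s+\int_0^t Y_{s-}f(X_{s-})dM_s$, leaving a finite-variation process of the form $\int_0^t Y_{s-}\,dK_s$ where $K$ aggregates $N^f$, the drift contributions $-dN^{|u|_E}_s$ and $dA^{-u^{*}}_s$ multiplied by $f(X_{s-})$, and the quadratic-covariation term $[f(X),Y]$.

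The main obstacle, and the analytic content of the lemma, is showing that the Revuz measure of $K$ equals the smooth signed measure associated with $-L^{\bar{Q}^{u,n}}f$ on $E_n$. By the characterization of the generator this amounts to proving, for every bounded $g\in D(\mathcal{E})_n$, the identity $-\int_E g\,d\mu_K=\bar{Q}^{u,n}(f,g)$, with $\bar{Q}^{u,n}$ given by (\ref{e2.8}). Here one must match three pieces in turn: $\mathcal{E}(f,g)$ pairs with the Revuz measure of $N^f$ (using the Beurling--Deny representation of \cite{HC06}); $\int g\,d\mu_{<M^f,M>}$ pairs with the Revuz measure of the jump portion of $[f(X),Y]$ together with the continuous covariation $\langle M^{f,c},M^{-u^{*},c}\rangle$ (here LeJan's transformation rule of \cite{HZC08} is crucial to decode $\mu_{<M^f,M>}$ into symmetric-diffusion plus jump contributions); and $\int fg\,d\mu_{-u}$ pairs with the remaining drift, for which one invokes the decomposition $\mu_{-u}=\mu_{-u^{*}}+\eta_u-|u|_E m$ from (\ref{ap1}) together with (\ref{dd}) to absorb the non-BV CAF $N^{|u|_E}$. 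The localization to $\{t<\tau_{E_n}\}$ is what legitimizes all these Revuz-measure identifications, since on $F_n$ the relevant smooth measures lie in $S_K$ and $u^{*}$ is bounded, guaranteeing the requisite integrabilities.
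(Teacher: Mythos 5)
Your overall strategy---It\^o's product formula applied to $f(X_t)e^{M^{-u^{*}}_{t}-N_t^{|u|_E}}$ together with an identification of the drift through the generator of $(\bar{Q}^{u,n},D(\mathcal{E})_n)$---is the same as the paper's, and your exponential SDE for $Y_t$ is correct and is implicit in the paper's display (\ref{add3}). The paper merely performs the analytic step first: from (\ref{e2.8}), (\ref{dd}) and \cite[Theorem 5.2.7]{oshima} it derives the explicit bounded-variation decomposition (\ref{add1}) of $N^f_t$ (this is also what makes $N^{|u|_E}$ a CAF of bounded variation, which you need even to assert that $Y_t$ is the exponential of a semimartingale), and only then runs It\^o's formula, so the drift $\int_0^t e^{\cdots}L^{\bar{Q}^{u,n}}f(X_s)\,ds$ appears directly rather than being extracted at the end. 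Two of the tools you invoke are misplaced: the Beurling--Deny formula of \cite{HC06} and LeJan's rule of \cite{HZC08} are not used in this lemma at all; they enter only in the proof of Theorem \ref{th0}.

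The genuine gap is in your jump bookkeeping. You claim that the local-martingale pieces ``assemble into'' $\int_0^t Y_{s-}dM^f_s+\int_0^t Y_{s-}f(X_{s-})dM_s$, leaving a finite-variation remainder $\int_0^t Y_{s-}dK_s$ whose Revuz measure you then compute. But the jump part of $[f(X),Y]$, namely $\sum_{s\le t}Y_{s-}\triangle f(X_s)\bigl(e^{\triangle M^{-u^{*}}_s}-1\bigr)$, is of finite variation yet not predictable; matching Revuz measures determines such a process only up to a local-martingale AF, and the compensated-jump local martingale left over after you subtract its dual predictable projection has jumps $Y_{s-}\triangle f(X_s)\bigl(e^{\triangle M^{-u^{*}}_s}-1\bigr)$, which coincide with the jumps of neither $\int Y_{s-}dM^f_s$ (jumps $Y_{s-}\triangle f(X_s)$) nor $\int Y_{s-}f(X_{s-})dM_s$ (jumps $Y_{s-}f(X_{s-})(e^{\triangle M^{-u^{*}}_s}-1)$, since $\triangle M_s=\triangle M^{-u^{*}}_s+\triangle B_s$). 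So this term can be swept neither into the continuous drift nor into your two displayed stochastic integrals without further argument. The entire content of the paper's computation (\ref{add4}) is precisely this point: the It\^o jump-correction sum, the jump covariations coming from $\langle M^f,M\rangle$, and the term $-\int_0^t Y_{s-}f(X_{s-})dB^p_s$ are recombined---using the specific identity $\triangle B_s=e^{\triangle M^{-u^{*}}_s}-1-\triangle M^{-u^{*}}_s$ from (\ref{ap11}) and (\ref{add31})---into the single integral $\int_0^t Y_{s-}f(X_{s-})dM^d_s$. This cancellation is the heart of the lemma and is delicate (one must keep careful track of which terms are optional brackets and which are their compensators); asserting the conclusion of that cancellation is assuming exactly what has to be proved, so the proposal as written does not close.
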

\begin{proof} Let $f\in
D(L^{\bar{Q}^{u,n}})$ and $g\in D(\mathcal{E})_n$. Then, by
(\ref{e2.8}), we get
\begin{eqnarray}\label{dd2}
\mathcal{E}(f,g)
&=&\bar{Q}^{u,n}(f,g)+\int_{E}gd\mu_{<M^{f},M>}+\int_{E}fgd\mu_{-u}\nonumber\\
&=&-(L^{\bar{Q}^{u,n}}f,g)+\int_{E}gd\mu_{<M^{f},M>}+\int_{E}fgd\mu_{-u}.
\end{eqnarray}
By (\ref{dd}), (\ref{dd2}) and \cite[Theorem 5.2.7]{oshima}, we
find that $(N_t^{|u|_E})_{t\ge 0}$ is a CAF of bounded variation
and
\begin{eqnarray*}
N^{f}_{t} &=&\int_{0}^{t}L^{\bar{Q}^{u,n}}f(X_{s})ds
-<M^{f},M>_{t}-\int_{0}^{t}f(X_{s})d(A^{-u^{*}}_{s}-N_s^{|u|_E})
\end{eqnarray*}
for $t<\tau_{E_n}$. Therefore, for $t<\tau_{E_n}$, we have
\begin{eqnarray}\label{add1}
f(X_{t})-f(X_{0})
&=&M^{f}_{t}+N^{f}_{t}\nonumber\\
&=&M^{f}_{t}+\int_{0}^{t}L^{\bar{Q}^{u,n}}f(X_{s})ds
-<M^{f},M>_{t}\nonumber\\
& &-\int_{0}^{t}f(X_{s})d(A^{-u^{*}}_{s}-N_s^{|u|_E}).
\end{eqnarray}

By It\^{o}'s formula (cf. \cite[II, Theorem 33]{P}), (\ref{add1})
and (\ref{add31})-(\ref{add33}), we obtain that for $t<\tau_{E_n}$
\begin{eqnarray}\label{add3}
f(X_{t})\hskip-0.5cm& &\hskip-0.5cm
e^{{M^{-u^{*}}_{t}-N_t^{|u|_E}}}\nonumber\\
&=&f(X_{0})+\int^{t}_{0}e^{{M^{-u^{*}}_{s-}-N_{s-}^{|u|_E}}}df(X_{s})+\int^{t}_{0}e^{{M^{-u^{*}}_{s-}-N_{s-}^{|u|_E}}}f(X_{s-})d({M^{-u^{*}}_{s}-N_s^{|u|_E}})\nonumber\\
&&+\frac{1}{2}\int^{t}_{0}e^{{M^{-u^{*}}_{s-}-N_{s-}^{|u|_E}}}f(X_{s-})d<M^{-u^{*},c}>_{s}+\int^{t}_{0}e^{{M^{-u^{*}}_{s-}-N_{s-}^{|u|_E}}}d<M^{f,c},M^{-u^{*},c}>_{s}\nonumber\\
&&+\sum_{s\leq
t}[f(X_{s})e^{{M^{-u^{*}}_{s}-N_s^{|u|_E}}}-f(X_{s-})e^{{M^{-u^{*}}_{s-}-N_{s-}^{|u|_E}}}\nonumber\\
& &\ \ \ \ \ \ -e^{{M^{-u^{*}}_{s-}-N_{s-}^{|u|_E}}}\triangle
f(X_{s})-f(X_{s-})e^{{M^{-u^{*}}_{s-}-N_{s-}^{|u|_E}}}\triangle
M^{-u^{*}}_{s}]\nonumber\\
&=&f(X_{0})+\int^{t}_{0}e^{{M^{-u^{*}}_{s-}-N_{s-}^{|u|_E}}}dM^{f}_{s}+\int^{t}_{0}e^{{M^{-u^{*}}_{s-}-N_{s-}^{|u|_E}}}L^{\bar{Q}^{u,n}}f(X_{s})ds
   \nonumber\\
&&-\int^{t}_{0}e^{{M^{-u^{*}}_{s-}-N_{s-}^{|u|_E}}}d<M^{f},M>_{s}
-\int_{0}^{t}e^{{M^{-u^{*}}_{s-}-N_{s-}^{|u|_E}}}f(X_{s})d(A^{-u^{*}}_{s}-N_s^{|u|_E})\nonumber\\
& &   +\int^{t}_{0}e^{{M^{-u^{*}}_{s-}-N_{s-}^{|u|_E}}}f(X_{s-})d({M^{-u^{*}}_{s}-N_s^{|u|_E}})\nonumber\\
&&+\frac{1}{2}\int^{t}_{0}e^{{M^{-u^{*}}_{s-}-N_{s-}^{|u|_E}}}f(X_{s-})d<M^{-u^{*},c}>_{s}+\int^{t}_{0}e^{{M^{-u^{*}}_{s-}-N_{s-}^{|u|_E}}}d<M^{f,c},M^{-u^{*},c}>_{s}\nonumber\\
&&+\sum_{s\leq
t}[f(X_{s})e^{{M^{-u^{*}}_{s}-N_s^{|u|_E}}}-f(X_{s-})e^{{M^{-u^{*}}_{s-}-N_{s-}^{|u|_E}}}\nonumber\\
& &\ \ \ \ \ \ -e^{{M^{-u^{*}}_{s-}-N_{s-}^{|u|_E}}}\triangle
f(X_{s})-f(X_{s-})e^{{M^{-u^{*}}_{s-}-N_{s-}^{|u|_E}}}\triangle
M^{-u^{*}}_{s}]\nonumber\\
&=&\left\{f(X_{0})+\int^{t}_{0}e^{{M^{-u^{*}}_{s-}-N_{s-}^{|u|_E}}}dM^{f}_{s}+\int^{t}_{0}e^{{M^{-u^{*}}_{s-}-N_{s-}^{|u|_E}}}L^{\bar{Q}^{u,n}}f(X_{s})ds\right.\nonumber\\
& &\ \ \ \ \ \ \left.
    +\int^{t}_{0}e^{{M^{-u^{*}}_{s-}-N_{s-}^{|u|_E}}}f(X_{s-})d{M^{-u^{*}}_{s}}\right\}\nonumber\\
&&+\left\{-\int_{0}^{t}e^{{M^{-u^{*}}_{s-}-N_{s-}^{|u|_E}}}f(X_{s})dA^{-u^{*}}_{s}+\frac{1}{2}\int^{t}_{0}e^{{M^{-u^{*}}_{s-}-N_{s-}^{|u|_E}}}f(X_{s-})d<M^{-u^{*},c}>_{s}\right.\nonumber\\
&&\left.-\int^{t}_{0}e^{{M^{-u^{*}}_{s-}-N_{s-}^{|u|_E}}}d<M^{f},M>_{s}+\int^{t}_{0}e^{{M^{-u^{*}}_{s-}-N_{s-}^{|u|_E}}}d<M^{f,c},M^{-u^{*},c}>_{s}\right.\nonumber\\
&&\left.+\sum_{s\leq
t}[f(X_{s})e^{{M^{-u^{*}}_{s}-N_s^{|u|_E}}}-f(X_{s-})e^{{M^{-u^{*}}_{s-}-N_{s-}^{|u|_E}}}\right.\nonumber\\
& &\left.\ \ \ \ \ \
-e^{{M^{-u^{*}}_{s-}-N_{s-}^{|u|_E}}}\triangle
f(X_{s})-f(X_{s-})e^{{M^{-u^{*}}_{s-}-N_{s-}^{|u|_E}}}\triangle
M^{-u^{*}}_{s}]\right\}\nonumber\\
&=&\left\{f(X_{0})+\int^{t}_{0}e^{{M^{-u^{*}}_{s-}-N_{s-}^{|u|_E}}}dM^{f}_{s}+\int^{t}_{0}e^{{M^{-u^{*}}_{s-}-N_{s-}^{|u|_E}}}L^{\bar{Q}^{u,n}}f(X_{s})ds\right.\nonumber\\
& &\ \ \ \ \ \ \left.
    +\int^{t}_{0}e^{{M^{-u^{*}}_{s-}-N_{s-}^{|u|_E}}}f(X_{s-})d{M^{-u^{*}}_{s}}\right\}\nonumber\\
&&+\left\{-\int_{0}^{t}e^{{M^{-u^{*}}_{s-}-N_{s-}^{|u|_E}}}f(X_{s-})dB^{p}_{s}-\int^{t}_{0}e^{{M^{-u^{*}}_{s-}-N_{s-}^{|u|_E}}}d<M^{f,d},M^{d}>_{s}\right.\nonumber\\
    &&\left. -\int^{t}_{0}e^{{M^{-u^{*}}_{s-}-N_{s-}^{|u|_E}}}d<M^{f,d},M^{-u^{*},d}>_{s}\right.\nonumber\\
    & &\left.+\sum_{s\leq
t}[f(X_{s})e^{{M^{-u^{*}}_{s}-N_s^{|u|_E}}}-f(X_{s-})e^{{M^{-u^{*}}_{s-}-N_{s-}^{|u|_E}}}\right.\nonumber\\
& &\left.\ \ \ \ \ \
-e^{{M^{-u^{*}}_{s-}-N_{s-}^{|u|_E}}}\triangle
f(X_{s})-f(X_{s-})e^{{M^{-u^{*}}_{s-}-N_{s-}^{|u|_E}}}\triangle
M^{-u^{*}}_{s}]\right\}\nonumber\\
&:=&I+II.
\end{eqnarray}
Note that
\begin{eqnarray}\label{add4}
II&=&-\int^{t}_{0}e^{{M^{-u^{*}}_{s-}-N_{s-}^{|u|_E}}}f(X_{s-})dB^{p}_{s}+\sum_{s\leq
      t}[-e^{{M^{-u^{*}}_{s-}-N_{s-}^{|u|_E}}}\triangle f(X_{s})\triangle B_{s}
       \nonumber\\
       & &-e^{{M^{-u^{*}}_{s-}-N_{s-}^{|u|_E}}}\triangle f(X_{s})\triangle M^{-u^{*}}_{s}]+\sum_{s\leq t}[f(X_{s})e^{M^{-u}_{s}-N_{s}^{|u|_E}}\nonumber\\
&&-f(X_{s-})e^{{M^{-u^{*}}_{s-}-N_{s-}^{|u|_E}}}
-e^{{M^{-u^{*}}_{s-}-N_{s-}^{|u|_E}}}\triangle f(X_{s})-f(X_{s-})e^{{M^{-u^{*}}_{s-}-N_{s-}^{|u|_E}}}\triangle M^{-u^{*}}_{s}]\nonumber\\
&=&-\int^{t}_{0}e^{{M^{-u^{*}}_{s-}-N_{s-}^{|u|_E}}}f(X_{s-})dB^{p}_{s}+\sum_{s\leq
      t}[-e^{{M^{-u^{*}}_{s-}-N_{s-}^{|u|_E}}}\triangle f(X_{s})(e^{\triangle
   M^{-u^{*}}_{s}}-1)\nonumber\\
&&+f(X_{s})e^{M^{-u^{*}}_{s}-N_{s}^{|u|_E}}-f(X_{s-})e^{M^{-u^{*}}_{s-}-N_{s-}^{|u|_E}}
-e^{M^{-u^{*}}_{s-}-N_{s-}^{|u|_E}}\triangle
f(X_{s}))\nonumber\\
& &-f(X_{s-})e^{M^{-u^{*}}_{s-}-N_{s-}^{|u|_E}}\triangle
M^{-u^{*}}_{s}]\nonumber\\
&=&-\int^{t}_{0}e^{{M^{-u^{*}}_{s-}-N_{s-}^{|u|_E}}}f(X_{s-})dB^{p}_{s}+\sum_{s\leq
  t}[e^{M^{-u^{*}}_{s-}-N_{s-}^{|u|_E}}f(X_{s-})e^{\triangle
   M^{-u^{*}}_{s}}\nonumber\\
&&-f(X_{s-})e^{M^{-u^{*}}_{s-}-N_{s-}^{|u|_E}}-f(X_{s-})e^{M^{-u^{*}}_{s-}-N_{s-}^{|u|_E}}\triangle M^{-u^{*}}_{s}]\nonumber\\
 &=&-\int^{t}_{0}e^{M^{-u^{*}}_{s-}-N_{s-}^{|u|_E}}f(X_{s-})dB^{p}_{s}+\int^{t}_{0}e^{M^{-u^{*}}_{s-}-N_{s-}^{|u|_E}}f(X_{s-})dB_{s}\nonumber\\
&=&\int_{0}^{t}e^{M^{-u^{*}}_{s-}-N_{s-}^{|u|_E}}f(X_{s-})dM^{d}_{s}.
\end{eqnarray}
Therefore (\ref{add2}) follows from (\ref{add3}) and (\ref{add4}).
 \end{proof}

\subsection [short title]{The bilinear form associated with
${(\bar{P}^{u,n}_{t})_{t\geq0}}$ on ${L^2(E_n;e^{-2u^{*}}m)}$}

For $n\in\mathbf{N}$, since $u^{*}\cdot I_{E_n}$ is bounded,
$(\bar{P}^{u,n}_{t})_{t\geq0}$ is also a strongly continuous
semigroup on $L^{2}(E_n;e^{-2u^{*}}m)$ by Theorem \ref{th2.1}. In
this subsection, we will study the bilinear form associated with
${(\bar{P}^{u,n}_{t})_{t\geq0}}$ on ${L^2(E_n;e^{-2u^{*}}m)}$.

Define $D(\mathcal{E})_{n,b}:=D(\mathcal{E})_n\cap
L^{\infty}(E;m)$.
 Let $f,g\in D(\mathcal{E})_{n,b}$. Note that
$e^{-2u^{*}}g=(e^{-2u^{*}}-1)g+g\in D(\mathcal{E})_{n,b}$. Define
\begin{equation}\label{add11}
\mathcal{E}^{u,n}(f,g):=\bar{Q}^{u,n}(f,e^{-2u^{*}}g),\ \ f,g\in
D(\mathcal{E})_{n,b}.
\end{equation}
Then, by Theorem \ref{th2.1}, we get
\begin{equation}\label{dog1}
 \mathcal{E}^{u,n}(f,g)=\lim_{t\rightarrow0}\frac{1}{t}(f-\bar{P}^{u,n}_{t}f,e^{-2u^{*}}g)_m=
 \lim_{t\rightarrow0}\frac{1}{t}(f-\bar{P}^{u.n}_{t}f,g)_{e^{-2u^{*}}m}.
\end{equation}
$(\mathcal{E}^{u,n},D(\mathcal{E})_{n,b})$ is called the bilinear
from associated with $(\bar{P}^{u,n}_{t})_{t\geq0}$ on
$L^2(E_n;e^{-2u^{*}}m)$.

Note that
\begin{eqnarray*}
<M^{f}\hskip -0.5cm&, & \hskip -0.4cm M^{d}>_{t}\nonumber\\
&=&[M^{f}, M^{d}]_{t}^{p}\nonumber\\
   &=&\left\{\sum_{s\leq
   t}[f(X_{s})-f(X_{s-})][e^{(u^{*}(X_{s-})-u^{*}(X_{s}))}-1-(u^{*}(X_{s-})-u^{*}(X_{s}))]\right\}^{p}\nonumber\\
   &=&\int_{0}^{t}\int_{E_{\triangle}}[f(y)-f(X_{s})]
        [e^{(u^{*}(X_{s})-u^{*}(y))}-1-(u^{*}(X_{s})-u^{*}(y))]N(X_{s},dy)dH_{s}.
\end{eqnarray*}
Then
\begin{eqnarray}\label{add41}
\int_Eg\hskip -0.5cm&& \hskip -0.5cm d\mu_{<M^{f},M^{d}>}\nonumber\\
&=&\int_E\int_{E_{\triangle}}g(x)[f(y)-f(x)]
        [e^{(u^{*}(x)-u^{*}(y))}-1-(u^{*}(x)-u^{*}(y))]N(x,dy)\nu(dx).\nonumber\\
        & &
\end{eqnarray}
By (\ref{add34}) and (\ref{ap1}), we get
\begin{eqnarray}\label{add42}
\int_Efgd\mu_{-u}&=&\int_E\int_{E_{\triangle}}
        f(x)g(x)[e^{(u^{*}(x)-u^{*}(y))}-1-(u^{*}(x)-u^{*}(y))]N(x,dy)\nu(dx)\nonumber\\
        & &+\frac{1}{2}\int_Efgd\mu_{<M^{-u^{*},c}>}+\int_Efgd\eta_u-\int_Efg|u|_Edm.
\end{eqnarray}

Similar to \cite[Theorem 5.3.1]{Fu94} (cf. also \cite[Chapter
5]{oshima}), we can show that $J(dx,dy)=\frac{1}{2}N(y,dx)\nu(dy)$
and $K(dx)=N(x,\triangle)\nu(dx)$.  Therefore, we obtain by
(\ref{add11}), (\ref{e2.8}), (\ref{add41}) and (\ref{add42}) that
\begin{eqnarray}\label{e2.14}
\mathcal{E}^{u,n}(f,g)&=&\bar{Q}^{u.n}(f,e^{-2u^{*}}g)\nonumber\\
\nonumber&=&\mathcal{E}(f,e^{-2u^{*}}g)-\int_{E}e^{-2u^{*}}gd\mu_{<M^{f},M>}-\int_{E}e^{-2u^{*}}fgd\mu_{-u}\\
\nonumber&=&\mathcal{E}(f,e^{-2u^{*}}g)-\int_{E}e^{-2u^{*}}gd\mu_{<M^{f},M^{-u^{*}}>}-\int_{E}e^{-2u^{*}}gd\mu_{<M^{f},M^{d}>}\\
& &-\int_{E}e^{-2u^{*}}fgd\mu_{-u}\nonumber\\
\nonumber
&=&\mathcal{E}(f,e^{-2u^{*}}g)-\int_{E}e^{-2u^{*}}gd\mu_{<M^{f},M^{-u^{*}}>}\nonumber\\
&&-2\int_{E\times
E-d}e^{-2u^{*}(y)}g(y)f(x)[e^{(u^{*}(y)-u^{*}(x))}-1-(u^{*}(y)-u^{*}(x))]J(dx,dy)\nonumber\\
 & &       -\frac{1}{2}\int_{E}
        e^{-2u^{*}}fgd\mu_{<M^{-u^{*},c}>}-{\cal E}(|u|_E,e^{-2u^{*}}fg).\nonumber\\
        & &
\end{eqnarray}

\begin{thm}\label{th0} For each $n\in\mathbf{N}$, under either the assumption of
Theorem \ref{th1.1} or Theorem \ref{th1.2}, we have
\begin{equation}\label{new2}
{\cal E}^{u,n}(f,g)={Q}^{u}(fe^{-u^{*}},ge^{-u^{*}}),\ \ f,g\in
D(\mathcal{E})_{n,b}.
\end{equation}
\end{thm}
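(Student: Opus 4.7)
The plan is to expand both sides of (\ref{new2}) using the Beurling--Deny decomposition of $(\mathcal{E}, D(\mathcal{E}))$ from \cite{HC06} and LeJan's transformation rule from \cite{HZC08}, and to match the resulting continuous, jumping, and killing pieces term by term. Starting from (\ref{e2.14}), I first rewrite $\mathcal{E}(|u|_E, e^{-2u^*}fg)$ via (\ref{dd}) as $\int_E e^{-2u^*}fg\,d\eta_u - (|u|_E, e^{-2u^*}fg)_m$, and on the right I use $u = u^* - |u|_E$ to split $\mathcal{E}(u, fge^{-2u^*}) = \mathcal{E}(u^*, fge^{-2u^*}) - \mathcal{E}(|u|_E, fge^{-2u^*})$. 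The $|u|_E$--contributions then match directly, reducing the proof to verifying
\[
\mathcal{E}(f, e^{-2u^*}g) - \int_E e^{-2u^*}g\,d\mu_{\langle M^f, M^{-u^*}\rangle} - J_{\rm corr}(f,g) - \frac{1}{2}\int_E e^{-2u^*}fg\,d\mu_{\langle M^{-u^*,c}\rangle} = \mathcal{E}(fe^{-u^*}, ge^{-u^*}) + \mathcal{E}(u^*, fge^{-2u^*}),
\]
where $J_{\rm corr}(f,g)$ denotes the explicit non-local integral appearing in (\ref{e2.14}).

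Next, I would split each $\mathcal{E}$ via Beurling--Deny into continuous, jumping, and killing pieces. For the continuous part, LeJan's rule $\mu^c_{\langle \varphi\psi, \theta\rangle} = \varphi\,\mu^c_{\langle \psi, \theta\rangle} + \psi\,\mu^c_{\langle \varphi, \theta\rangle}$ expands $\mathcal{E}^c(fe^{-u^*}, ge^{-u^*})$ into terms involving $\mu^c_{\langle f, g\rangle}$, $\mu^c_{\langle f, u^*\rangle}$, $\mu^c_{\langle g, u^*\rangle}$, and $\mu^c_{\langle u^*, u^*\rangle}$, each weighted by $e^{-2u^*}$. Combining with $\mathcal{E}^c(u^*, fge^{-2u^*})$ expanded the same way, the pure $\mu^c_{\langle u^*, u^*\rangle}$ piece is absorbed by the $-\frac{1}{2}\int e^{-2u^*}fg\,d\mu_{\langle M^{-u^*,c}\rangle}$ correction, while the $\mu^c_{\langle f, u^*\rangle}$ and $\mu^c_{\langle g, u^*\rangle}$ cross-terms match the continuous piece of $\mu_{\langle M^f, M^{-u^*}\rangle}$ in (\ref{e2.14}). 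The killing part is straightforward: $\mathcal{E}^k(fe^{-u^*}, ge^{-u^*}) = \int fg\, e^{-2u^*}\,dK$, which matches the killing contribution of $\mathcal{E}(f, e^{-2u^*}g)$ after cancellation with the killing piece of $\mathcal{E}(u^*, fge^{-2u^*})$.

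For the jumping part, I would expand the pointwise integrand $(f(x)e^{-u^*(x)} - f(y)e^{-u^*(y)})(g(x)e^{-u^*(x)} - g(y)e^{-u^*(y)})$ and regroup using the identity $e^{-u^*(x)} - e^{-u^*(y)} = -e^{-u^*(y)}(e^{u^*(y)-u^*(x)}-1)$. The linear Taylor piece $(u^*(y)-u^*(x))$ recombines with the jumping contribution of $\mathcal{E}(u^*, fge^{-2u^*})$, while the quadratic residue $e^{u^*(y)-u^*(x)} - 1 - (u^*(y)-u^*(x))$ reproduces exactly $J_{\rm corr}(f,g)$ through the identity $J(dx,dy) = \frac{1}{2}N(y,dx)\nu(dy)$ used in (\ref{add41})--(\ref{add42}).

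The main technical obstacle is handling the non-symmetric jumping measure, since $J\ne \hat{J}$ in general and the non-symmetric Beurling--Deny representation from \cite{HC06} involves both the symmetric part $J_0$ and the skew part $J_1$. Under Theorem \ref{th1.1}, the finiteness of $J_1(E\times E\setminus d)$ renders all jumping integrals absolutely convergent for $f,g\in D(\mathcal{E})_{n,b}$, so the identity reduces to pointwise algebra as above. Under Theorem \ref{th1.2} one loses this finiteness, and the substitute is density of $C^\infty_0(U)$ in $D(\mathcal{E})$: first establish the identity for $f,g \in C^\infty_0(U) \cap D(\mathcal{E})_{n,b}$ (where compact support controls all integrands uniformly), and then extend by $\mathcal{E}_1$-continuity, using the continuity estimate (\ref{e2.9}) to pass to the limit on both sides. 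This approximation step is the point at which the two standing assumptions genuinely diverge.
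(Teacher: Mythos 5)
Your overall architecture coincides with the paper's: reduce (\ref{new2}) to an identity free of the $|u|_E$-terms, split everything by the Beurling--Deny decomposition of \cite{HC06}, match diffusion, jumping and killing parts (LeJan's rule for the diffusion part), and treat the hypothesis of Theorem \ref{th1.2} by first verifying the identity on $C^\infty_0(U)$ and approximating. However, there is a genuine gap in how you handle the \emph{non-symmetry}. The rule $\mu^c_{\langle \varphi\psi,\theta\rangle}=\varphi\,\mu^c_{\langle\psi,\theta\rangle}+\psi\,\mu^c_{\langle\varphi,\theta\rangle}$ is the \emph{symmetric} LeJan derivation property and only takes care of the symmetric local part $\tilde{\mathcal{E}}^c$. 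The antisymmetric local part $\check{\mathcal{E}}^c(w,vf)=\frac{1}{2}(\mathcal{E}^c(w,vf)-\hat{\mathcal{E}}^c(w,vf))$ is not a bilinear energy measure and needs its own chain rule, namely the derivation property (\ref{pro3.10-1}) for the functional $\langle L(w,v),\cdot\rangle$ taken from \cite{HZC08} --- and it is exactly there, not in the convergence of the jump integrals, that the hypothesis $J_1(E\times E\setminus d)<\infty$ is used (the jump integrals are already controlled by $J\le 2\tilde J$ and $\mathcal{E}(\phi,\phi)$ as in (\ref{star11})--(\ref{star12}), with no reference to $J_1$). Relatedly, your jumping-part computation expands the symmetric product $(f(x)e^{-u^*(x)}-f(y)e^{-u^*(y)})(g(x)e^{-u^*(x)}-g(y)e^{-u^*(y)})$, but for a non-symmetric $J$ the jumping part of $\mathcal{E}$ is the one-sided expression $SPV\int 2(w(y)-w(x))v(y)J(dx,dy)$; the symmetric-product algebra does not apply. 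Without an argument for $\check{\mathcal{E}}^c$ and with the wrong form of the jump part, the term-by-term matching does not close.

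Two further points. First, $u^*$ is bounded only on $E_n$, not on $E$; quantities such as $\mu_{\langle M^f,M^{-u^*}\rangle}$, $\mathcal{E}(u^*,e^{-2u^*}fg)$ and any application of a chain rule to $e^{-u^*}$ require either a localization to $D_{\rm loc}$ or, as the paper does, a truncation $u^*_l:=((-l)\vee u^*)\wedge l$ with an estimate showing the error $\int_E e^{-2u^*}g\,d\mu_{\langle M^f,M^{u^*-u^*_l}\rangle}\to 0$; your proposal works with $u^*$ directly and skips this. Second, under the assumption of Theorem \ref{th1.2} the reason the identity holds on $C^\infty_0(U)$ is not that ``compact support controls all integrands'': one still needs a substitute for the chain rule on the non-symmetric local part, which the paper obtains from the explicit pseudo-differential representation (\ref{star44}) of $(\mathcal{E},C^\infty_0(G))$ with coefficient measures $\nu^G_{ij}$ and generalized drifts $F^G_i$; the verification is then a direct computation for smooth functions, followed by the approximation you describe.
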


\begin{proof} We fix an $n\in\mathbf{N}$. Define
\begin{eqnarray}\label{ap3}
\Psi^{u^{*},n}(f\hskip-0.4cm&,&\hskip-0.3cm g):=\mathcal{E}(f,e^{-2u^{*}}g)-\int_{E}e^{-2u^{*}}gd\mu_{<M^{f},M^{-u^{*}}>}\nonumber\\
&&-2\int_{E\times
E-d}e^{-2u^{*}(y)}g(y)f(x)[e^{(u^{*}(y)-u^{*}(x))}-1-(u^{*}(y)-u^{*}(x))]J(dx,dy)\nonumber\\
 & &       -\frac{1}{2}\int_{E}
        e^{-2u^{*}}fgd\mu_{<M^{-u^{*},c}>},\ \ f,g\in D(\mathcal{E})_{n,b}.
\end{eqnarray}
Then, by (\ref{e2.14}) and (\ref{ap101}), we find that
(\ref{new2}) is equivalent to
\begin{equation}\label{ap2}
\Psi^{u^{*},n}(f,g)={\cal E}(fe^{-u^{*}},ge^{-u^{*}})+{\cal
E}(u^{*},e^{-2u^{*}}fg),\ \ f,g\in D(\mathcal{E})_{n,b}.
\end{equation}

Since $u^{*}\cdot I_{E_n}$ is bounded, there exists
$l_0\in\mathbf{N}$ such that $|u^{*}(x)|\le l_0$ for all $x\in
E_n$. For $l\in \mathbf{N}$, define $u^{*}_l:=((-l)\vee
u^{*})\wedge l$. Then $u^{*}_l\in D({\cal E})_b$ and
$u^{*}=u^{*}_l$ on $E_n$ for $l\ge l_0$. Similar to \cite[Lemma
5.3.1]{Fu94}, we can show that
$\mu_{<M^{-u^{*},c}>}|_{E_n}=\mu_{<M^{-u^{*}_l,c}>}|_{E_n}$ for
$l\ge l_0$. For $\phi\in D({\cal E})_b$, we define
\begin{eqnarray}\label{ap102}
\Psi^{\phi,n}(f\hskip-0.4cm&,&\hskip-0.3cm g):=\mathcal{E}(f,e^{-2\phi}g)-\int_{E}e^{-2\phi}gd\mu_{<M^{f},M^{-\phi}>}\nonumber\\
&&-2\int_{E\times
E-d}e^{-2\phi(y)}g(y)f(x)[e^{(\phi(y)-\phi(x))}-1-(\phi(y)-\phi(x))]J(dx,dy)\nonumber\\
 & &       -\frac{1}{2}\int_{E}
        e^{-2\phi}fgd\mu_{<M^{-\phi,c}>},\ \ f,g\in D(\mathcal{E})_{n,b}.
\end{eqnarray}
Then, by (\ref{ap3}) and (\ref{ap102}), we find that for $l\ge
l_0$
$$
\Psi^{u^{*},n}(f,g)=\Psi^{u^{*}_l,n}(f,g)+\int_{E}e^{-2u^{*}}gd\mu_{<M^{f},M^{u^{*}-u^{*}_l}>},\
\ f,g\in D(\mathcal{E})_{n,b}.
$$

Note that by \cite[(5.1.3)]{oshima}
\begin{eqnarray*}
\left|\int_{E}e^{-2u^{*}}gd\mu_{<M^{f},M^{u^{*}-u^{*}_l}>}\right|&\le&
2e^{2l_0}\|g\|_{\infty}{\cal E}(f,f)^{\frac{1}{2}}{\cal E}(u^{*}-u^{*}_l,u^{*}-u^{*}_l)^{\frac{1}{2}}\\
&\rightarrow&0\ \ {\rm as}\ l\rightarrow \infty,
\end{eqnarray*}
and $${\cal E}(u^{*}_l,e^{-2u^{*}}fg)\rightarrow{\cal
E}(u^{*},e^{-2u^{*}}fg)\ \ {\rm as}\ l\rightarrow \infty.
$$
Hence, to establish (\ref{ap2}), it is sufficient to show that for
any $\phi\in D({\cal E})_{b}$ and $f,g\in D(\mathcal{E})_{n,b}$
\begin{equation}\label{ap103}
\Psi^{\phi,n}(f,g)={\cal E}(fe^{-\phi},ge^{-\phi})+{\cal
E}(\phi,e^{-2\phi}fg).
\end{equation}

Let $\phi\in D({\cal E})_{b}$. By \cite[(5.3.2)]{oshima}, we have
\begin{equation}\label{new1}\int gd\mu_{<M^{f},M^{-\phi}>}
=-\mathcal{E}(f,g\phi)-\mathcal{E}(\phi,gf)+\mathcal{E}(f\phi,g).
\end{equation}
By (\ref{ap102}) and (\ref{new1}), we find that (\ref{ap103}) is
equivalent to
\begin{eqnarray}\label{jack1}
\mathcal{E}(f\hskip -0.5cm&, & \hskip -0.4cm e^{-2\phi}g)+\mathcal{E}(f,e^{-2\phi}g\phi)-\mathcal{E}(f\phi,e^{-2\phi}g)\nonumber\\
& &-2\int_{E\times
E-d}e^{-2\phi(y)}g(y)f(x)[e^{(\phi(y)-\phi(x))}-1-(\phi(y)-\phi(x))]J(dx,dy)\nonumber\\
& &-\frac{1}{2}\int_{E}
        e^{-2\phi}fgd\mu_{<M^{-\phi,c}>}\nonumber\\
        &=&{\cal E}(fe^{-\phi},ge^{-\phi}).
\end{eqnarray}

Denote by $M^{-\phi,j}_{t}$ and $M^{-\phi,k}_{t}$ the jumping and
killing parts of $M^{-\phi}_{t}$, respectively. Then, similar to
\cite[(5.3.9) and (5.3.10)]{Fu94}, we get
$$
\mu_{<M^{-\phi,j}>}(dx)=2\int_E(\phi(x)-\phi(y))^2J(dy,dx)\ \ {\rm
and}\ \ \mu_{<M^{-\phi,k}>}(dx)=\phi^2(x)K(dx).
$$
Thus, for any $w\in D({\cal E})_b$, we have
\begin{eqnarray}\label{star1}
\int_Ewd\mu_{<M^{-\phi,c}>}&=&\int_Ewd(\mu_{<M^{-\phi}>}-\mu_{<M^{-\phi,j}>}-\mu_{<M^{-\phi,k}>})\nonumber\\
&=&2{\cal E}(\phi,\phi w)-{\cal E}(\phi^2,w)\nonumber\\
& &-2\int_{E\times E-d}(\phi(y)-\phi(x))^2w(y)J(dx,dy)-\int_Ew\phi^2dK.\nonumber\\
& &
\end{eqnarray}

By (\ref{star1}), we find that (\ref{jack1}) is equivalent to
\begin{eqnarray}\label{newnew}
\mathcal{E}(f\hskip -0.5cm&, & \hskip -0.4cm
e^{-2\phi}g)+\mathcal{E}(f,e^{-2\phi}g\phi)-\mathcal{E}(f\phi,e^{-2\phi}g)-{\cal
E}(\phi,e^{-2\phi}\phi fg)
+\frac{1}{2}{\cal E}(\phi^2,e^{-2\phi}fg)\nonumber\\
& &-2\int_{E\times
E-d}e^{-2\phi(y)}g(y)f(x)[e^{(\phi(y)-\phi(x))}-1-(\phi(y)-\phi(x))]J(dx,dy)\nonumber\\
& &+\int_{E\times
E-d}(\phi(y)-\phi(x))^2e^{-2\phi(y)}f(y)g(y)J(dx,dy)+\frac{1}{2}\int_Ee^{-2\phi}fg\phi^2dK\nonumber\\
        &=&{\cal E}(fe^{-\phi},ge^{-\phi}).
\end{eqnarray}

\noindent {\bf Proof of (\ref{newnew}) under the assumption of
Theorem \ref{th1.1}.}

\noindent Denote by $\tilde{\cal E}$ the symmetric part of ${\cal
E}$. Then $(\tilde{\cal E}, D({\cal E}))$ is a regular symmetric
Dirichlet form. Denote by $\tilde{J}$ and $\tilde{K}$ the jumping
and killing measures of $(\tilde{\mathcal{E}},D(\mathcal{E}))$,
respectively. Then
\begin{eqnarray}\label{star11}
\int_{E\times
                 E-d}\hskip -0.5cm& & \hskip -0.5cm (\phi(y)-\phi(x))^2J(dx,dy)+\int_E\phi^2dK\nonumber\\
                 &\le&2\left\{\int_{E\times
                 E-d}(\phi(y)-\phi(x))^2\tilde{J}(dx,dy)+\int_E\phi^2d\tilde{K}\right\}\nonumber\\
                 &\le& 2{\cal
                 E}(\phi,\phi)
\end{eqnarray}
and
\begin{eqnarray}\label{star12}
\int_{E\times
E-d}\hskip -0.5cm& & \hskip -0.4cm [e^{(\phi(y)-\phi(x))}-1-(\phi(y)-\phi(x))]J(dx,dy)\nonumber\\
&\le&C_{\|\phi\|_{\infty}}\int_{E\times
    E-d}(\phi(y)-\phi(x))^{2}J(dx,dy)\nonumber\\
    &\leq& C_{\|\phi\|_{\infty}}\mathcal{E}(\phi,\phi)
    \end{eqnarray}
for some constant $C_{\|\phi\|_{\infty}}>0$.  Hence, to establish
(\ref{newnew}) for $\phi\in D({\cal E})_b$ and $f,g\in D({\cal
E})_{n,b}$, it is
  sufficient to  establish (\ref{newnew}) for  $\phi,f,g\in D:=C_0(E)\cap D({\cal
    E})$ by virtue of the density of $D$ in $D({\cal
    E})$ and approximation.

    By \cite[Theorem 4.8 and Proposition 5.1]{HC06}, we have
the following Beurling-Deny decomposition
\begin{eqnarray}\label{e2.4}
\mathcal{E}(f,g)&=&\mathcal{E}^{c}(f,g)+SPV\int_{E\times
E-d}2(f(y)-f(x))g(y)J(dx,dy)\nonumber\\
& &+\int_{E}fgdK,\ \ f,g\in D(\mathcal{E})_{b},
\end{eqnarray}
where $SPV\int$ denotes the symmetric principle value integral
(see \cite[Definition 2.5]{HC06}) and $\mathcal{E}^{c}(f,g)$
satisfies the left strong local property in the sense that
$\mathcal{E}^{c}(f,g)=0$ if $f$ is constant ${\cal E}$-q.e. on a
quasi-open set containing the quasi-support of $g$ (see
\cite[Theorem 4.1]{HC06}). By (\ref{e2.4}), we obtain that for any
$w\in D({\cal E})_b$,
\begin{eqnarray*}
2{\cal E}(\phi\hskip -0.5cm&, & \hskip -0.4cm \phi w)-{\cal E}(\phi^2,w)\\
& &-2\int_{E\times E-d}(\phi(y)-\phi(x))^2w(y)J(dx,dy)-\int_Ew\phi^2dK\\
&=&2{\cal E}^c(\phi,\phi w)-{\cal E}^c(\phi^2,w).
\end{eqnarray*}
Hence (\ref{newnew}) is equivalent to
\begin{eqnarray}\label{new4}
\mathcal{E}(f\hskip -0.5cm&, & \hskip -0.4cm
e^{-2\phi}g)+\mathcal{E}(f,e^{-2\phi}g\phi)-\mathcal{E}(f\phi,e^{-2\phi}g)
-{\mathcal{E}}^{c}(\phi,e^{-2\phi}\phi fg)+\frac{1}{2}{\mathcal{E}}^{c}(\phi^2,e^{-2\phi}fg)\nonumber\\
& &-2\int_{E\times
E-d}e^{-2\phi(y)}g(y)f(x)[e^{(\phi(y)-\phi(x))}-1-(\phi(y)-\phi(x))]J(dx,dy)\nonumber\\
        &=&{\cal E}(fe^{-\phi},ge^{-\phi}).
\end{eqnarray}

In the following, we will establish (\ref{new4}) by showing that
its left hand side and its right hand side have the same
diffusion, jumping and killing parts. We assume without loss of
generality that $\phi,f,g\in D$.

First, let us consider the diffusion parts of both sides of
(\ref{new4}). Following \cite[(3.4)]{HZC08}, we introduce a linear
functional $<L(w,v),\cdot>$ for $w,v\in D$ by
\begin{equation}\label{new11}
<L(w,v),f>:=\check{\mathcal{E}}^c(w,vf):=\frac{1}{2}({\cal
E}^c(w,vf)-\hat{\cal E}^c(w,vf)),\ \ f\in D,
\end{equation}
where $\hat{\cal E}^c$ is the left strong local part of the dual
Dirichlet form $(\hat{\cal E}, D({\cal E}))$. Define
\begin{eqnarray*}
D_{\rm loc}:=\{w|\mbox{ for any relatively compact open
set } G\ {\rm of}\ E, {\rm there} \nonumber\\
\mbox{ {\rm \ exists} a function}\ v\in D \mbox{ such that } w=v
\mbox{ on } G\}.
\end{eqnarray*}
Then, the linear functional $<L(w,v),\cdot>$ can be extended and
defined for any $w,v\in D_{\rm loc}$ (cf. \cite[Definition
3.6]{HZC08}). Note that $J_1$ is assumed to be finite. Similar to
\cite[Theorem 3.8]{HZC08}, we can prove the following lemma.
\begin{lem}
Let $w_1,\dots,w_l,v\in D_{\rm loc}$ and $f\in D$. Denote
$w:=(w_1,\dots,w_l)$. If $\psi\in C^2({\mathbf R^l})$, then
$\psi(w)\in D_{\rm loc}$, $\psi_{x_i}(w)\in D_{\rm loc}$, $1\leq
i\leq l$, and
\begin{eqnarray}\label{pro3.10-1}
<L(\psi(w),v),f>=\sum_{i=1}^{l}<L(w_i,v),\psi_{x_i}(w)f>.
\end{eqnarray}
\end{lem}

By (\ref{new11}) and (\ref{pro3.10-1}), we get
\begin{eqnarray}\label{add01}
{\check{\mathcal{E}}}^{c}(f\hskip-0.5cm&,&\hskip-0.4cm e^{-2\phi}g)+{\check{\mathcal{E}}}^{c}(f,e^{-2\phi}g\phi)-{\check{\mathcal{E}}}^{c}(f\phi,e^{-2\phi}g)\nonumber\\
& &-\check{\mathcal{E}}^{c}(\phi,e^{-2\phi}\phi fg)+\frac{1}{2}{\check{\mathcal{E}}}^{c}(\phi^2,e^{-2\phi}fg)\nonumber\\
&=&{\check{\mathcal{E}}}^{c}(f,e^{-2\phi}g)+{\check{\mathcal{E}}}^{c}(f,e^{-2\phi}g\phi)-{\check{\mathcal{E}}}^{c}(f\phi,e^{-2\phi}g)\nonumber\\
&=&{\check{\mathcal{E}}}^{c}(f,e^{-2\phi}g)-{\check{\mathcal{E}}}^{c}(\phi,e^{-2\phi}fg)\nonumber\\
&=&{\check{\mathcal{E}}}^{c}(f,e^{-2\phi}g)+{\check{\mathcal{E}}}^{c}(e^{-\phi},e^{-\phi}fg)\nonumber\\
&=&{\check{\mathcal{E}}}^{c}(fe^{-\phi},ge^{-\phi}).
\end{eqnarray}
By LeJan's formula (cf.  \cite[Theorem 3.2.2 and Page 117]{Fu94},
we can check that
\begin{eqnarray}\label{add02}
\tilde{\mathcal{E}}^{c}(f\hskip-0.5cm&,&\hskip-0.4cm
e^{-2\phi}g)+\tilde{\mathcal{E}}^{c}(f,e^{-2\phi}g\phi)
             -\tilde{\mathcal{E}}^{c}(f\phi,e^{-2\phi}g)\nonumber\\
             & &-\tilde{\mathcal{E}}^{c}(\phi,e^{-2\phi}\phi fg)+\frac{1}{2}\tilde{\mathcal{E}}^{c}(\phi^2,e^{-2\phi}fg)\nonumber\\
&=&\frac{1}{2}\int_{E}d\tilde\mu^{c}_{<f,e^{-2\phi}g>}+\frac{1}{2}\int_{E}d\tilde\mu^{c}_{<f,e^{-2\phi}g\phi>}
  -\frac{1}{2}\int_{E}d\tilde\mu^{c}_{<f\phi,e^{-2\phi}g>}\nonumber\\
&&-\frac{1}{2}\int_{E}d\tilde\mu^{c}_{<\phi,e^{-2\phi}\phi fg>}+\frac{1}{4}\int_{E}d\tilde\mu^{c}_{<\phi^2,e^{-2\phi}fg>}\nonumber\\
&=&\frac{1}{2}\int_{E}d\tilde\mu^{c}_{<fe^{-\phi},ge^{-\phi}>}\nonumber\\
&=&\tilde{\mathcal{E}}^{c}(fe^{-\phi},ge^{-\phi}),
\end{eqnarray}
where $\tilde{\mathcal{E}}^{c}$ denotes the strong local part of
 $(\tilde{\cal E}, D({\cal E}))$ and
$\tilde\mu^c$ denotes the local part of energy measure w.r.t.
$(\tilde{\cal E}, D({\cal E}))$. Then the diffusion parts of both
sides of (\ref{new4}) are equal by (\ref{add01}) and
(\ref{add02}).

For the jumping parts of (\ref{new4}), we have
\begin{eqnarray*}
\mathcal{E}^{j}(f\hskip-0.5cm&,&\hskip-0.4cm e^{-2\phi}g)+\mathcal{E}^{j}(f,e^{-2\phi}g\phi)-\mathcal{E}^{j}(f\phi,e^{-2\phi}g)-\mathcal{E}^{j}({fe^{-\phi},ge^{-\phi}})\\
&&-2\int_{E\times
  E-d}e^{-2\phi(y)}g(y)f(x)[e^{(\phi(y)-\phi(x))}-1
  -(\phi(y)-\phi(x))]J(dx,dy)\\
&=&2SPV\int_{E\times E-d}\{(f(y)-f(x))e^{-2\phi(y)}g(y)+(f(y)-f(x))\phi(y)e^{-2\phi(y)}g(y)\\
&&-(f(y)\phi(y)-f(x)\phi(x))e^{-2\phi(y)}g(y)-(f(y)e^{-\phi(y)}-f(x)e^{-\phi(x)})e^{-\phi(y)}g(y)\\
&&-e^{-2\phi(y)}g(y)f(x)[e^{(\phi(y)-\phi(x))}-1-(\phi(y)-\phi(x))]\}J(dx,dy)\\
&=&0.
\end{eqnarray*}
For the killing parts of (\ref{new4}), we have
\begin{eqnarray*}
\mathcal{E}^{k}(f\hskip-0.5cm&,&\hskip-0.4cm e^{-2\phi}g)+\mathcal{E}^{k}(f,e^{-2\phi}g\phi)-\mathcal{E}^{k}(f\phi,e^{-2\phi}g)-\mathcal{E}^{k}(fe^{-\phi},ge^{-\phi})\\
&=&\int_{E}(fe^{-2\phi}g+fe^{-2\phi}g\phi-f\phi e^{-2\phi}g-fe^{-2\phi}g)dK\\
&=&0.
\end{eqnarray*}
The proof is complete.

\vskip 0.4cm \noindent {\bf Proof of (\ref{newnew}) under the
assumption of Theorem \ref{th1.2}.}

\noindent Let $G$ be a relatively compact open subset of $U$ such
that the distance between the boundary of $G$ and that of $U$ is
greater than some constant $\delta>0$. Then, similar to
\cite[Theorem 4.8]{HZC08}, we can show that $({\cal E},
C^{\infty}_0(G))$ has the following representation:
\begin{eqnarray}\label{star44}
{\mathcal{E}}(w,v)&=&\sum_{i,j=1}^d\int_U\frac{\partial
w}{\partial x_i}\frac{\partial v}{\partial x_j}d\nu^G_{ij}
+\sum_{i=1}^d<F^G_i,\frac{\partial
w}{\partial x_i}v>\nonumber\\
& &+\ SPV\int_{U\times U-d}
2\left(\sum_{i=1}^d(y_i-x_i)\frac{\partial w}{\partial
y_i}(y)I_{\{|x-y|\leq
\frac{\delta}{2}\}}(x,y)\right)v(y)\tilde{J}(dx,dy)\nonumber\\
 &
&+\int_{U\times U-
d}2\left(w(y)-w(x)-\sum_{i=1}^d(y_i-x_i)\frac{\partial w}{\partial
y_i}(y)I_{\{|x-y|\leq \frac{\delta}{2}\}}(x,y)\right)v(y)J(dx,dy)\nonumber\\
& &+\int_U wvdK,\ \ \ \ \ \ w,v\in C^{\infty}_0(G),
\end{eqnarray}
where $\{\nu^G_{ij}\}_{1\le i,j\le d}$ are signed Radon measures
on $U$ such that for every $K\subset U$, $K$ is compact,
$\nu^G_{ij}(K)=\nu^G_{ji}(K)$ and
$\sum_{i,j=1}^d\xi_i\xi_j\nu^G_{ij}(K)\ge 0$ for all
$\xi=(\xi_1,\dots,\xi_d)\in \mathbf{R}^{d}$, $\{F^G_i\}_{1\leq
i\leq d}$ are generalized functions  on $U$.

By (\ref{star44}), we can check that (\ref{newnew}) holds for all
$\phi,f,g\in C^{\infty}_0(U)$. Therefore (\ref{newnew}) holds for
$\phi\in D({\cal E})_b$ and $f,g\in D({\cal E})_{n,b}$ by
(\ref{star11}), (\ref{star12}) and approximation. The proof is
complete.
\end{proof}

\subsection [short title]{Proof of Theorems \ref{th1.1} and \ref{th1.2} and some remarks}

\begin{proof} By Theorem \ref{th2.1}, for each $n\in\mathbf{N}$,
$(\bar{P}^{u,n}_{t})_{t\geq0}$ is a strongly continuous semigroup
of bounded operators on $L^{2}(E_n;m)$ with
$\|\bar{P}^{u,n}_{t}\|_{2}\leq e^{\beta_{n}t}$ for every $t>0$ and
some constant $\beta_n>0$. Moreover, the coercive closed form
associated with $(e^{-\beta_nt}\bar{P}^{u,n}_{t})_{t\geq0}$ is
given by $(\bar{Q}^{u,n}_{\beta_n},D(\mathcal{E})_n)$. Note that
$(\bar{P}^{u,n}_{t})_{t\geq0}$ is also a strongly continuous
semigroup of bounded operators on $L^{2}(E_n;e^{-2u^{*}}m)$ and
the bilinear from associated with $(\bar{P}^{u,n}_{t})_{t\geq0}$
on $L^2(E_n;e^{-2u^{*}}m)$ is given by
$(\mathcal{E}^{u,n},D(\mathcal{E})_{n,b})$ (see (\ref{dog1})).

Define
$$P^{u,n}_{t}f(x):=E_{x}[e^{N^{u}_{t}}f(X_{t});t<\tau_{E_n}].$$
Then
\begin{eqnarray}\label{dog2}
P^{u,n}_{t}f(x)&=&E_{x}[e^{N^{u^{*}}_{t}-N^{|u|_E}_t}f(X_{t});t<\tau_{E_n}]\nonumber\\
&=&E_{x}[e^{u^{*}(X_{t})-u^{*}(X_{0})+M^{-u^{*}}_{t}-N_t^{|u|_E}}f(X_{t});t<\tau_{E_n}]\nonumber\\
&=&e^{-u^{*}(x)}\bar{P}^{u,n}_{t}(e^{u^{*}}f)(x).\end{eqnarray}
Hence $({P}^{u,n}_{t})_{t\geq0}$ is a strongly continuous
semigroup of bounded operators on $L^{2}(E_n;m)$. Let
$(Q^{u,n},D(\mathcal{E})_{n,b})$  be the restriction of $Q^{u}$ to
$D(\mathcal{E})_{n,b}$. Then, by (\ref{dog2}), (\ref{dog1}) and
Theorem \ref{th0}, we know that the bilinear from associated with
$({P}^{u,n}_{t})_{t\geq0}$ on $L^2(E_n;m)$ is given by
$(Q^{u,n},D(\mathcal{E})_{n,b})$. That is,
\begin{equation}\label{dog11}
Q^{u,n}(f,g)=
 \lim_{t\rightarrow0}\frac{1}{t}(f-{P}^{u,n}_{t}f,g)_{m},\ \ f,g\in D(\mathcal{E})_{n,b}.
\end{equation}

\noindent (i) Suppose that there exists a constant
$\alpha_{0}\geq0$ such that
$$
  Q^{u}(f,f)\geq -\alpha_0(f,f)_m,\ \ \forall f\in
D(\mathcal{E})_{b}.
$$
For $n\in\mathbf{N}$, let $(L^{n},D(L^{n}))$ be the generator of
$(P^{u,n}_{t})_{t\geq0}$ on $L^2(E_n;m)$. Then
$D(L^{n}-\alpha_{0})$ is dense in $L^{2}(E_{n};m)$.

Define \begin{equation}\label{dog3}
\bar{L}^nf(x)=e^{u^{*}(x)}L^n(e^{-u^{*}}f)(x),\ \ f\in
D(\bar{L}^n):=\{e^{u^{*}}g|g\in D(L^n)\}.
\end{equation}
Then, by (\ref{dog2}), $(\bar{L}^n,D(\bar{L}^n))$ is the generator
of $(\bar{P}^{u,n}_{t})_{t\geq0}$  on $L^{2}(E_n;e^{-2u^{*}}m)$.
$(\bar{L}^n,D(\bar{L}^n))$ is also the generator of
$(\bar{P}^{u,n}_{t})_{t\geq0}$ on $L^{2}(E_n;m)$ due to the
boundedness of $u^{*}$ on $E_n$. Since
$(e^{-\beta_nt}\bar{P}^{u,n}_{t})_{t\geq0}$ is a strongly
continuous contraction semigroup on $L^{2}(E_n;m)$, $ {\rm
Range}(\lambda-\bar{L}^{n})=L^{2}(E_{n};m)$ for all
$\lambda>\beta_n$. Hence
Range$(\lambda-({L}^{n}-\alpha_0))=L^{2}(E_{n};m)$ for all
$\lambda>\beta_n-\alpha_0$ by (\ref{dog3}).

Let $f\in L^{2}(E_{n};m)$. Then, for any $\alpha>0$, we obtain by
(\ref{dog11}) that
\begin{eqnarray*}
\nonumber\|[\alpha-(L^{n}-\alpha_{0})]f\|_2\cdot\|f\|_2&=&\|
           [(\alpha+\alpha_{0})-L^{n}]f\|_2\cdot\|f\|_2\\
\nonumber&\geq&
             ([(\alpha+\alpha_{0})-L^{n}]f,f)_{m}\\
\nonumber&=&Q^{u,n}(f,f)+(\alpha+\alpha_0)(f,f)_{m}\\
&\geq&\alpha(f,f)_{m}.
\end{eqnarray*}
Hence $L^{n}-\alpha_{0}$ is dissipative on $L^{2}(E_{n};m)$.
Therefore $(e^{-\alpha_{0}t}P^{u,n}_{t})_{t\geq0}$ is a strongly
continuous contraction semigroup on $L^{2}(E_n;m)$ by the
Hille-Yosida theorem (cf. \cite[Chapter 1, Theorem 2.6]{EK}).

Let $g\in L^{2}(E;m)$ and $t>0$. Then
\begin{eqnarray*}
\|P^{u}_{t}g\|_2&\leq&\|P^{u}_{t}|g|\,\|_2\\
&=&\lim_{l\rightarrow\infty}\|P^{u}_{t}|g\cdot
I_{E_l}|\,\|_2\\
&\leq&\liminf_{l\rightarrow\infty}\liminf_{n\rightarrow\infty}\|P^{u,n}_{t}|g\cdot
I_{E_l}|\,\|_2\\
&\leq&e^{\alpha_{0}t}\|g\|_2.
\end{eqnarray*}
Since $g\in L^{2}(E;m)$ is arbitrary, we get $$
 \|P^{u}_{t}\|_2\leq
e^{\alpha_{0}t},\ \ \forall t>0. $$

\noindent (ii) Suppose that there exists a constant
$\alpha_{0}\geq0$ such that
\begin{eqnarray}\label{dog77}
\|P^{u}_{t}\|_2\leq e^{\alpha_{0}t},\ \ \forall t>0.
\end{eqnarray}
Let $n\in \mathbf{N}$ and $f\in L^{2}(E_{n};m)$. Then
$$
\|
P^{u,n}_{t}f\|_2\leq\|P^{u,n}_{t}|f|\,\|_2\leq\|P^{u}_{t}|f|\,\|_2\leq
e^{\alpha_{0}t}\|f\|_2.
$$
Hence $(e^{-\alpha_{0}t}{P}^{u,n}_{t})_{t\geq0}$ is a strongly
continuous contraction semigroup on $L^{2}(E_n;m)$. By
(\ref{dog11}), we get
\begin{equation}\label{dog101}
Q^{u,n}(f,f)+\alpha_0(f,f)_m =
\lim_{t\rightarrow0}\frac{1}{t}(f-e^{-\alpha_{0}t}{P}^{u,n}_{t}f,f)_{m}\ge
0,\ \ \forall f\in D(\mathcal{E})_{n,b}.
\end{equation}
By (\ref{dog101}) and approximation, we find that
$$
  Q^{u}(f,f)\geq -\alpha_0(f,f)_m,\ \ \forall f\in
D(\mathcal{E})_{b}.
$$

Now we show that $(P^{u}_{t})_{t\geq0}$ is strongly continuous on
$L^{2}(E;m)$. Let $n\in \mathbf{N}$ and $f\in L^{2}(E_{n};m)$
satisfying $f\ge 0$. Then, we obtain by (\ref{dog77}) and the
strong continuity of $({P}^{u,n}_{t})_{t\geq0}$ that
\begin{eqnarray*}
\limsup_{t\rightarrow 0}\hskip-0.48cm&
&\hskip-0.5cm\|f-e^{-\alpha_0
t}P^{u}_{t}f\|^2_2\\
&=&\limsup_{t\rightarrow 0}\{2(f-e^{-\alpha_0
t}P^{u}_{t}f,f)_m-[(f,f)_m-\|e^{-\alpha_0 t}P^{u}_{t}f\|^2_2]\}\\
&\le&2\limsup_{t\rightarrow 0}(f-e^{-\alpha_0 t}P^{u}_{t}f,f)_m\\
&\le&2\limsup_{t\rightarrow 0}(f-e^{-\alpha_0
t}P^{u,n}_{t}f,f)_m\\
&=&0.
\end{eqnarray*}
Since $f$ and $n$ are arbitrary, $({P}^{u}_{t})_{t\geq0}$ is
strongly continuous on $L^{2}(E;m)$ by (\ref{dog77}). The proof is
complete.
\end{proof}
\begin{rem}
Let $u\in D({\cal E}$). Define
\begin{equation}\label{ap22}
B^u_{t}=\sum_{s\leq
t}\left[e^{(u(X_{s-})-u(X_{s}))}-1-(u(X_{s-})-u(X_{s}))\right].
\end{equation}
Note that $(B^u_t)_{t\ge 0}$ may not be locally integrable (cf.
\cite[Theorem 3.3]{CSM07}). To overcome this difficulty, we
introduced the nonnegative function $u^{*}$ and the locally
integrable increasing process $(B_t)_{t\ge 0}$ (see (\ref{u}) and
(\ref{ap11})). This technique has been used in \cite{CSM07} to
show that if $X$ is symmetric and $u\in D({\cal E})_e$, then
$(P_{t}^{u})_{t\ge 0}$ is strongly continuous if and only if
$({{Q}}^{u},D(\mathcal{E})_{b})$ is lower semi-bounded. Here and
henceforth $D({\cal E})_e$ denotes the extended Dirichlet space of
$({\cal E},D({\cal E}))$.

In fact, if we assume that $({\cal E},D({\cal E}))$ satisfies the
strong sector condition instead of the weak sector condition (cf.
\cite[Pages 15 and 16]{MR92} for the definitions), then similar to
\cite[Page 158]{CSM07} we can introduce a function $|u|^g_E$ for
each $u\in D({\cal E})_e$. Define $u^{*}:=u+|u|^g_E$. Using this
defined $u^{*}$, similar to the above proof of this section, we
can show that Theorems 1.1 and 1.2 hold for all $u\in D({\cal
E})_e$.

On the other hand, suppose we still assume that $({\cal E},D({\cal
E}))$ satisfies the weak sector condition and $u\in D({\cal
E})_e$. Define
$$ F^u_{t}=\sum_{s\leq
t}\left[e^{(u(X_{s-})-u(X_{s}))}-1-(u(X_{s-})-u(X_{s}))\right]^2.
$$
If $(F^u_t)_{t\ge 0}$ is locally integrable on $[0, \zeta)$ for
q.e. $x\in E$, then we can show that Theorems 1.1 and 1.2 still
hold. The proof is similar to the above proof of this section but
we directly apply the $(B^u_t)_{t\ge 0}$ defined in (\ref{ap22})
instead of the $(B_t)_{t\ge 0}$ defined in (\ref{ap11}). Note that
if $u$ is lower semi-bounded or $e^u\in D({\cal E})_e$ (cf.
\cite[Example 3.4 (iii)]{CSM07}), then $(F^u_t)_{t\ge 0}$ is
locally integrable on $[0, \zeta)$ for q.e. $x\in E$.
\end{rem}

\begin{rem}
If $(\mathcal{E},D(\mathcal{E}))$ is a symmetric Dirichlet form,
then the assumption of Theorem \ref{th1.1} is automatically
satisfied. Note that $(P^{u}_{t})_{t\geq0}$ is symmetric on
$L^{2}(E;m)$. If $(P^{u}_{t})_{t\geq0}$ is strongly continuous,
then (\ref{dog77})
holds (cf. \cite[Remark 1.6(ii)]{C09}). Therefore, the following three assertions are equivalent:\\
(i) $(Q^u,D(\mathcal{E})_{b})$ is lower semi-bounded.\\
(ii) There exists a constant $\alpha_0\ge 0$ such that
$\|P^{u}_{t}\|_2\leq e^{\alpha_0 t}$ for $t>0$.\\
(iii) $(P^{u}_{t})_{t\geq0}$ is strongly continuous on
$L^{2}(E;m)$.
\end{rem}

\begin{rem}
Denote by $S$ the set of all smooth measures on
$(E,{\mathcal{B}}(E))$. Let $\mu=\mu_{1}-\mu_{2}\in S-S$,
$(A^{1}_{t})_{t\ge 0}$ and $(A^{2}_{t})_{t\ge 0}$ be PCAFs with
Revuz measures $\mu_{1}$ and $\mu_{2}$, respectively. Define
$$\bar{P}^A_{t}f(x)=E_{x}[e^{A^2_{t}-A^1_t}f(X_{t})],\ \ f\ge 0\ {\rm and}\ t\ge 0,$$
and \begin{eqnarray*}
\left\{
\begin{array}{l}{\cal
E}^{\mu}(f,g):=\mathcal{E}(f,g)+\int_Efgd\mu,\\
f,g\in D({\cal E}^{\mu}):=\{w\in D(\mathcal{E})|w\ {\rm is}\
(\mu_1+\mu_2)-{\rm square\ integrable}\}.
\end{array}
\right.
\end{eqnarray*}
Then, by a localization argument similar to that used in the proof
of Theorems \ref{th1.1} and \ref{th1.2} (cf. also \cite{C06}), we
can show that
the following two conditions are equivalent:\\
(i) There exists a constant $\alpha_{0}\geq0$ such that
$$
  {\cal
E}^{\mu}(f,f)\geq -\alpha_0(f,f)_m,\ \ \forall f\in D({\cal
E}^{\mu}).
$$
 (ii) There exists a constant $\alpha_{0}\geq0$ such that
$$
\|\bar{P}^{A}_{t}\|_2\leq e^{\alpha_{0}t},\ \ \forall t>0.$$
Furthermore, if one of these conditions holds, then the semigroup
$(\bar{P}^{A}_{t})_{t\geq0}$ is strongly continuous on
$L^{2}(E;m)$.

This result generalizes the corresponding results of \cite{AM1}
and \cite{C07}. Note that, similar to Theorems \ref{th1.1} and
\ref{th1.2}, it is not necessary to assume that the bilinear form
$({\cal E}^{\mu}, D({\cal E}^{\mu}))$ satisfies the sector
condition.
\end{rem}

\section[short title]{Examples}
\emph{\emph{\textbf{Example 3.1.}}}\ \ In this example, we study
the generalized Feynman-Kac semigroup for the non-symmetric
Dirichlet form given in \cite[II, 2 d)]{MR92}.

Let $d\geq3$, $U$ be an open set of $\mathbf{R^d}$ and $m=dx$, the
Lebesgue measure on $U$. Let $a_{ij}\in L^1_{\rm loc}(U;dx)$,
$1\le i,j\le d$, $b_i,d_i\in L^d_{\rm loc}(U;dx)$, $d_{i}-b_{i}\in
L^{d}(U;dx)\cup L^{\infty}(U;dx)$, $1\leq i\leq d$,  $c\in
L^{d/2}_{\rm loc}(U;dx)$. Define for $f,g\in C^{\infty}_{0}(U)$
\begin{eqnarray*}
\mathcal{E}(f,g) &=&\sum_{i,j=1}^{d}\int_U\frac{\partial
f}{\partial
     x_{i}}\frac{\partial g}{\partial
     x_{j}}a_{ij}dx+\sum_{i=1}^{d}\int_U
     f\frac{\partial g}{\partial x_{i}}d_{i}dx\\
\nonumber
   &&+\sum_{i=1}^{d}\int_U
     \frac{\partial f}{\partial x_{i}}gb_{i}dx+\int_U fgcdx.
\end{eqnarray*}

Denote $\tilde{a}_{ij}:=\frac{1}{2}(a_{ij}+a_{ji})$ and
$\check{a}_{ij}:=\frac{1}{2}(a_{ij}-a_{ji})$, $1\le i,j\le d$.
Suppose that the following conditions hold

\noindent \emph{(C1)} There exists $\gamma\in (0,\infty)$ such
that $ \sum_{i,j=1}^d
\tilde{a}_{ij}\xi_i\xi_j\ge\gamma\sum_{i=1}^d|\xi_i|^2,\ \ \forall
\xi=(\xi_1,\dots,\xi_d)\in\mathbf{R^d} $.

\noindent \emph{(C2)} $|\check{a}_{ij}|\le M\in (0,\infty)\ \ {\rm
for}\ 1\le i,j\le d$.

\noindent \emph{(C3)}
$cdx-\sum^{d}_{i=1}\frac{\partial{d_{i}}}{\partial x_{i}}\geq0$
and $cdx-\sum^{d}_{i=1}\frac{\partial{b_{i}}}{\partial
x_{i}}\geq0$ (in the sense of Schwartz distributions, i.e.,
$\int_U(cf+\sum^{d}_{i=1}d_{i}\frac{\partial{f}}{\partial
x_{i}})dx,
\int_U(cf+\sum^{d}_{i=1}b_{i}\frac{\partial{f}}{\partial x_{i}})dx
\geq0$ for all $f\in C^{\infty}_{0}(U)$ with $f\ge 0$).

\noindent Then $(\mathcal{E}, C^{\infty}_{0}(U))$ is closable and
its closure $(\mathcal{E},D(\mathcal{E}))$ is a regular Dirichlet
form on $L^{2}(U;dx)$ (see \cite[II, Proposition 2.11]{MR92}).

Let $u\in C^{\infty}_{0}(U)$. Then, for $f\in C^{\infty}_{0}(U)$,
we have
\begin{eqnarray*}
Q^u(f,f)
&=&\mathcal{E}(f,f)+\mathcal{E}(u,f^2)\\
&=&\sum_{i,j=1}^{d}\int_U\frac{\partial f}{\partial
    x_{i}}\frac{\partial f}{\partial
    x_{j}}a_{ij}dx+\int_Uf^2\left(c(1+u)+\sum_{i=1}^d\frac{\partial u}{\partial
    x_{i}}b_i\right)dx\\
&&+\int_U\sum_{i=1}^{d}\frac{\partial f^2}{\partial
    x_{i}}\left(\frac{d_i+b_i}{2}+ud_i+\sum_{j=1}^{d}\frac{\partial
u}{\partial
    x_{j}}a_{ji}\right)dx.
\end{eqnarray*}
Suppose that the following condition holds

\noindent \emph{(C4)} There exists a constant $\alpha_{0}\geq0$
such that
$$
\left(\alpha_0+c(1+u)+\sum_{i=1}^d\frac{\partial u}{\partial
    x_{i}}b_i\right)dx-\sum_{i=1}^{d}\frac{\partial (\frac{d_i+b_i}{2}+ud_i+\sum_{j=1}^{d}\frac{\partial
u}{\partial
    x_{j}}a_{ji})}{\partial
    x_{i}}\ge 0
$$
in the sense of Schwartz distribution.

\noindent Then $Q^{u}(f,f)\geq -\alpha_0(f,f)$ for any $f\in
C^{\infty}_{0}(U)$ and thus for any $f\in D({\cal E})_b$ by
approximation.

Let $X$ be a Hunt process associated with
$(\mathcal{E},D(\mathcal{E}))$ and $(P^{u}_{t})_{t\geq0}$ be the
generalized Feynman-Kac semigroup induced by $u$. Then, by Theorem
\ref{th1.1} or Theorem \ref{th1.2}, $(e^{-\alpha_0
t}P^{u}_{t})_{t\geq0}$ is a strongly continuous contraction
semigroup on $L^{2}(U;dx)$. \vskip 0.4cm \noindent
\emph{\emph{\textbf{Example 3.2.}}}\ \  In this example, we study
the generalized Feynman-Kac semigroup for the non-symmetric
Dirichlet form given in \cite[II, 3 e)]{MR92}.

Let $E$ be a locally convex topological real vector space which is
a (topological) Souslin space. Let $m:=\mu$ be a finite positive
measure on ${\cal B}(E)$ such that supp\,$\mu=E$. Let $E'$ denote
the dual of $E$ and $_{E'}\langle,\rangle_E:E'\times
E\rightarrow\mathbf{R}$ the corresponding dualization. Define
$$
\mathcal{F}C^{\infty}_{b}:=\{f(l_{1},\dots,l_{m})|m\in \mathbf{N},
f\in C^{\infty}_{b}(\mathbf{R^{m}}),l_{1},\dots,l_{m}\in
E^{'}\}.$$ Assume that there exists a separable real Hilbert space
$(H, \langle,\rangle_{H})$ densely and continuously embedded into
$E$. Identifying $H$ with its dual $H^{'}$ we have that
$$E'\subset H\subset E \mbox{\ densely and continuously},$$
and $_{E'}\langle,\rangle_{E}$ restricted to $E'\times H$
coincides with $\langle,\rangle_{H}$. For $f\in
\mathcal{F}C^{\infty}_{b}$ and $z\in E$, define $\nabla u(z)\in H
$ by
$$
\langle\nabla u(z),h\rangle_{H}=\frac{\partial u}{\partial h}(z),\
\  h\in H.
$$

Let $({\cal E}_{\mu},\mathcal{F}C^{\infty}_{b})$, defined by $$
{\cal E}_{\mu}(f,g)=\int_E\langle\nabla f,\nabla g\rangle_Hd\mu,\
\ f,g\in \mathcal{F}C^{\infty}_{b},
$$
be closable on $L^2(E;\mu)$ (cf. \cite[II, Proposition 3.8 and
Corollary 3.13]{MR92}). Let $\mathcal{L}_{\infty}(H)$ denote the
set of all bounded linear operators on $H$ with operator norm $\|\
\|$. Suppose $z\rightarrow A(z), z\in E$, is a map from $E$ to
$\mathcal{L}_{\infty}(H)$ such that $z\rightarrow \langle
A(z)h_{1},h_{2}\rangle_{H}$ is $\mathcal{B}(E)$-measurable for all
$h_{1},h_{2}\in H$. Furthermore, suppose that the following
conditions hold

\noindent \emph{(C1)} There exists $\gamma\in (0,\infty)$ such
that $\langle A(z)h,h\rangle_{H}\geq\gamma\|h\|^{2}_{H}$ for all
$h\in H$.

\noindent \emph{(C2)} $\|\tilde{A}\|_{\infty}\in L^{1}(E;\mu)$ and
$\|\check{A}\|_{\infty}\in L^{\infty}(E;\mu)$, where
$\tilde{A}:=\frac{1}{2}(A+\hat{A}),
\check{A}:=\frac{1}{2}(A-\hat{A})$ and $\hat{A}(z)$ denotes the
adjoint of $A(z), z\in E$.

\noindent \emph{(C3)} Let $c\in L^{\infty}(E,\mu)$ and $b,d\in
L^{\infty}(E\rightarrow H;\mu)$ such that for
$u\in\mathcal{F}C^{\infty}_{b}$ with $u\geq0$
$$\int_E(\langle d,\nabla u\rangle_{H}+cu)d\mu\geq0, \int_E(\langle b,\nabla
u\rangle_{H}+cu)d\mu\geq0.
$$

Define for $f,g\in\mathcal{F}C^{\infty}_{b}$
\begin{eqnarray*}
\mathcal{E}(f,g)
 &=&\int_E\langle A\nabla f,\nabla
       g\rangle_{H}d\mu+\int_E f\langle d,\nabla g\rangle_{H}d\mu\\
 \nonumber &&+\int_E \langle b,\nabla f\rangle_{H}gd\mu+\int_E fgcd\mu.
\end{eqnarray*}
Then $(\mathcal{E},\mathcal{F}C^{\infty}_{b})$ is closable and its
closure $(\mathcal{E},D(\mathcal{E}))$ is a quasi-regular
Dirichlet form on $L^{2}(E,\mu)$ (see by \cite[II, 3 e)]{MR92}.

Let $u\in \mathcal{F}C^{\infty}_{b}$. Then, for $f\in
\mathcal{F}C^{\infty}_{b}$, we have
\begin{eqnarray*}
Q^{u}(f,f)
\nonumber&=&\mathcal{E}(f,f)+\mathcal{E}(u,f^2)\\
\nonumber&=&\int_E\langle A\nabla f,\nabla
             f\rangle_{H}d\mu+\int_E(c(1+u)+\langle b,\nabla u\rangle_H)f^2dx\\
&&+\int_E\left\langle\frac{d+b}{2}+ud+A\nabla u,\nabla
f^2\right\rangle_Hd\mu.
\end{eqnarray*}
Suppose that the following condition holds

\noindent \emph{(C4)} There exists a  constant $\alpha_{0}\geq0$
such that
$$
\int_E\left\{(\alpha_0+c(1+u)+\langle b,\nabla
u\rangle_H)f+\left\langle\frac{d+b}{2}+ud+A\nabla u,\nabla
f\right\rangle_H\right\}d\mu\ge 0
$$
for all $f\in \mathcal{F}C^{\infty}_{b}$ with $f\ge 0$.

\noindent Then $Q^{u}(f,f)\geq -\alpha_0(f,f)$ for any $f\in
\mathcal{F}C^{\infty}_{b}$ and thus for any $f\in D({\cal E})_b$
by approximation.

Let $X$ be a $\mu$-tight special standard diffusion process
associated with $(\mathcal{E},D(\mathcal{E}))$ and
$(P^{u}_{t})_{t\geq0}$ be the generalized Feynman-Kac semigroup
induced by $u$. Then, by Theorem \ref{th1.1}, $(e^{-\alpha_0
t}P^{u}_{t})_{t\geq0}$ is a strongly continuous contraction
semigroup on $L^{2}(E;\mu)$.


\end{document}